\theoremstyle{plain}
	\newtheorem{definition}{Definition}[section]
	\newtheorem{lemma}[definition]{Lemma}
	\newtheorem{theorem}[definition]{Theorem}
	\newtheorem{corollary}[definition]{Corollary}
\theoremstyle{definition}
	\newtheorem{example}[definition]{Example}
	\newtheorem{remark}[definition]{Remark}
    \numberwithin{equation}{section}
\renewcommand{\mathbb}{\mathbbm}                     
\renewcommand{\epsilon}{\varepsilon}                 
\renewcommand{\phi}{\varphi}
\renewcommand{\le}{\leqslant}
\renewcommand{\ge}{\geqslant}
\newcommand{\origsetminus}{} \let\origsetminus=\setminus  
\renewcommand{\setminus}{\!\origsetminus\!}
\newcommand{\origfoo}{} \let\origfoo=\sqrt           
\renewcommand{\sqrt}[1]{\origfoo{#1}\;}
\newcommand{\abs}[1]{\left\lvert #1 \right\rvert}    
\newcommand{\norm}[1]{\left\lVert #1 \right\rVert}   
\newcommand{\scapro}[2]{\left\langle #1,#2 \right\rangle}       
\DeclareMathOperator{\1}{\mathbbm 1}
\DeclareMathOperator{\Id}{I}                       
\newcommand{\fa}{\qquad\text{for all }}
\newcommand{\leb}{\rm{leb}}
\newcommand{\n}{\Vert}
\newcommand{\om}{\omega}
\newcommand{\Om}{\Omega}
\newcommand{\PP}{\mathbb{P}}
\newcommand{\EE}{\mathbb{E}}
\newcommand{\cE}{\mathscr{E}}
\newcommand{\cF}{\mathscr{F}}
\newcommand{\la}{\lambda}
\newcommand{\calF}{\mathscr{F}}
\newcommand{\calG}{\mathscr{G}}
\DeclareMathOperator{\Borel}{{\mathfrak B}}
\newcommand{\D}{{\mathcal D}}
\renewcommand{\L}{{\mathcal L}}
\renewcommand{\S}{{\mathcal S}}
\newcommand{\I}{{\mathcal I}}
\renewcommand{\d}{{\mathrm d}}
\newcommand{\cp}{\widetilde{N}}
\DeclareMathOperator{\R}{{\mathbb R}}                
\DeclareMathOperator{\Rp}{{\mathbb R_+}}             
\DeclareMathOperator{\C}{{\mathbb C}}                
\DeclareMathOperator{\N}{{\mathbb N}}                
\DeclareMathOperator{\E}{{\mathbb E}}
\newcommand{\jan}[1]{{\color{red}{#1} }}
\title{L\'evy measures on Banach spaces}
\author{Jan van Neerven}
\address{
Delft Institute of Applied Mathematics,
Delft University of Technology,
P.O. Box 5031,
2600 GA Delft,
The Netherlands
        }
\email{J.M.A.M.vanNeerven@TUDelft.nl}
\author{Markus Riedle}
\address{
Department of Mathematics,
King's College London,
Strand,
London WC2R 2LS,
UK
        }
\email{markus.riedle@kcl.ac.uk}
\subjclass[2000]{Primary: 60B05, Secondary: 46B09, 60E05 60G57, 60H05 }
\keywords{L\'evy measure, Poisson random measure, UMD-space}
\begin{document}

\begin{abstract}
In this work, we establish an explicit characterisation of L\'evy measures on both $L^p$-spaces and UMD Banach spaces. In the case of $L^p$-spaces, L\'evy measures are characterised by an integrability condition, which directly generalises the known description of L\'evy measures on sequence spaces. The latter has been  the only known description of L\'evy measures on infinite dimensional Banach spaces that are not Hilbert. L\'evy measures on UMD Banach spaces are characterised by the finiteness of the expectation of a random $\gamma$-radonifying norm. Although this description is more abstract, it reduces to simple integrability conditions in the case of $L^p$-spaces.
\end{abstract}

\maketitle

\section{Introduction}

A $\sigma$-finite measure $\lambda$ on the Borel $\sigma$-algebra $\Borel(U)$ over a separable Banach space $(U, \Vert\cdot\Vert)$ with $\lambda(\{0\})=0$ is called a L\'evy measure if the function $\phi_\varrho\colon U^\ast\to \C$, defined by
\begin{align}\label{eq.char}
	\phi_\varrho(u^\ast)=\exp\left(\int_{U}  \left(e^{i\scapro{u}{u^\ast}}-1-i\scapro{u}{u^\ast}\1_{B_U}(u)\right)\, \lambda(\d u)\right), \qquad u^\ast\in U^\ast,
\end{align}
is the characteristic function of a probability measure $\varrho$ on $\Borel(U)$. Here, $B_U$ is the closed unit ball of $U$, and $U^\ast$ denotes the Banach space dual of $U$.
here and in the rest of the paper $\scapro{u}{u^\ast}$ denotes the value of the functional $u^\ast\in U^\ast$ on the element $u\in U$. General references to the theory of L\'evy measures include the books by Linde \cite{Linde} and Sato \cite{Sato}.

In the case where $U$ is finite-dimensional, it is well known that a $\sigma$-finite measure $\lambda$ on the Borel $\sigma$-algebra $\Borel(\R^d)$  with $\lambda(\{0\})=0$ is a L\'evy measure if and only if
\begin{align}\label{eq.int-R}
	\int_{\R^d}(|r|^2\wedge 1)\, \lambda (\d r)<\infty.
\end{align}
Indeed, the latter often serves as the definition of a L\'evy measure on $\R^d$ in the literature.

Replacing the Euclidean norm $\abs{\cdot}$ by the Hilbert space norm, the above equivalent characterisation of L\'evy measures extends  to separable Hilbert spaces; see Parthasarathy~\cite{Parthasarathy}.

Surprisingly, although the integrability condition \eqref{eq.int-R} can be formulated in Banach spaces,
this characterisation of L\'evy measures ceases to hold in arbitrary Banach spaces $U$.
In fact, for the case $U=C[0,1]$, the space of continuous functions on $[0,1]$ endowed with the supremum norm, it is shown in Araujo~\cite{Araujo} that
there exists a $\sigma$-finite Borel measure $\lambda$ on $\Borel(U)$ with $\lambda(\{0\})=0$ and satisfying
\begin{align}\label{eq.int}  \int_{U}(\norm{u}^2\wedge 1)\, \lambda (\d u)<\infty,
\end{align}
but the function $\phi_\varrho$ defined in \eqref{eq.char}
is not the characteristic function of a Borel measure $\varrho$ on $\Borel(U)$. Vice versa, there exists a $\sigma$-finite Borel measure $\lambda$  on $\Borel(U)$ with $\lambda(\{0\})=0$ such that \eqref{eq.char} is the characteristic function of a probability Borel measure on $\Borel(U)$ but $\lambda$ does not satisfy \eqref{eq.int}.

Explicit characterisations of L\'evy measures on an infinite dimensional Banach space, which is not a Hilbert space, is known for the spaces $\ell^p(\N)$ of summable sequences for $p\ge 1$, and for $L^p$ with $p\ge 2$. The former result was derived in Yurinskii \cite{Yurinskii} by means of a two-sided $L^p$-bound of compensated Poisson measure (for which he credited Novikov, who is also credited in Marinelli and R\"ockner \cite{MarinelliRockner-purely} for similar estimates). The latter is due to \cite{GMZ79}.

A sufficient condition in terms of an integrability condition similar to \eqref{eq.int} is known in Banach spaces $U$ of Rademacher type $p\in [1,2)$. In the converse direction, in Banach spaces $U$ of Rademacher cotype $q\in [2,\infty)$ it is known for a $\sigma$-finite measure $\lambda$, that if \eqref{eq.char} defines the characteristic function of a probability measure on $U$ then $\lambda$ satisfies an integrability condition similar to \eqref{eq.int}. In fact, these necessary or sufficient conditions can be used to characterise Banach spaces of Rademacher type $p\in [1,2]$ or of Rademacher cotype $q\in [2,\infty)$. These results can be found in Araujo and Gin\'e \cite{AraujoGine2}.

In the present paper, we derive explicit characterisations of L\'evy measures for both $L^p$-spaces and UMD Banach spaces. In the case of $L^p$-spaces, L\'evy measures are characterised by an integrability condition, which directly generalises the aforementioned results for $\ell^p(\N)$ for $p\ge 2$ by Yurinskii \cite{Yurinskii}. L\'evy measures on UMD Banach spaces are characterised by the finiteness of the expectation of a random $\gamma$-radonifying norm. Although the latter description is more abstract, we demonstrate its applicability by deducing similar integrability conditions for the special cases of $L^p$-spaces as obtained earlier by different arguments.

For both $L^p$-spaces and UMD spaces, our method relies on recently achieved two-sided $L^p$-estimates of integrals of vector-valued deterministic functions with respect to a compensated Poisson random measure in Dirksen \cite{Dirksen} and Yaroslavtsev \cite{Yaroslavtsev-20}. Such inequalities are sometimes called Bichteler-Jacod or Kunita inequalities and suggested to be called Novikov inequalities in \cite{MarinelliRockner-purely}, where more historical details can be found.  Since the results in Dirksen \cite{Dirksen} and Yaroslavtsev \cite{Yaroslavtsev-20} are only formulated for simple functions, we provide the straightforward arguments for their extension to arbitrary vector-valued deterministic functions.

Throughout the paper, all vector spaces are real. We write $\R_+ = (0,\infty)$ and $\N:=\{1,2,3,\dots\}$. We use the shorthand notation $$A\simeq_q B$$ to express that the two-sided inequality $c_q A \le B \le c_q'A$ holds with constants $0<c_q\le c_q'<\infty$ depending only on $q$.

\section{Preliminaries}\label{se.PRM}

Throughout this paper, we let $U$ be a {\em separable} Banach space with dual space $U^\ast$ and duality pairing $\scapro{\cdot}{\cdot}$.
The Borel $\sigma$-algebra on $U$ is denoted by $\Borel(U)$.
By a standard result in measure theory (e.g., \cite[Proposition E.21]{Nee}) the separability of $U$ implies that every finite Borel measure $\varrho$ on $U$ is a {\em Radon measure}, that is, that for all Borel sets $B\in \Borel(U)$ and $\varepsilon>0$ there exists a compact set $K$ such that $K\subseteq B$ and $\varrho(B\, \setminus \, K)<\varepsilon$. Such measures are uniquely described by their {\em characteristic function}, which is the function $\phi_\varrho\colon U^\ast\to\C$ given by
$$\qquad \phi_\varrho(u^\ast)=\int_U e^{i\scapro{u}{u^\ast}}\, \varrho(\d u).$$

\subsection{L\'evy measures}\label{ss.PRM-Levy}

In what follows we write
$$B_{U,r} := \{u\in U:\, \norm{u}\le r\}$$ for the closed ball in $U$ of radius $r>0$ centred at the origin. Its complement is denoted by $B_{U,r}^c$. We furthermore write $B_{U}:= B_{U,1}$ and $B_{U}^c:= B_{U,1}^c$ for the closed unit ball of $U$ and its complement.

A $\sigma$-finite measure $\lambda$ on the Borel $\sigma$-algebra $\Borel(U)$ with $\lambda(\{0\})=0$ is called a {\em L\'evy measure} if the function $\phi\colon U^\ast\to \C$ defined by
\begin{align}\label{eq.char-lambda}
	\phi(u^\ast)=\exp\left(\int_U \left(e^{i\scapro{u}{u^\ast}}-1-i \scapro{u}{u^\ast}\1_{B_{U}}(u)\right)\, \lambda(\d u)\right)
\end{align}
is the characteristic function  of a probability measure $\eta(\lambda)$ on $\Borel(U)$. For any $r>0$, we often decompose
\begin{align}\label{eq:decompose-lambda} \lambda=\lambda|_r+\lambda|_r^c,
\end{align}
where
$$\lambda_r(\cdot):=\lambda(\cdot\cap B_{U,r}) \ \ \hbox{ and } \ \  \lambda|_r^c(\cdot):=\lambda(\cdot \cap B_{U,r}^c).$$

Every {\em finite} measure $\lambda$ on $\Borel(U)$ with $\lambda(\{0\})=0$ is a L\'evy measure. In this case, a 
probability measure $\pi(\lambda)$ on $\Borel(U)$ is defined by 
\[ \pi(\lambda)(B)=e^{-\lambda(U)} \sum_{k=0}^\infty \frac{\lambda^{\ast k}(B)}{k!}. \]
We further define
\[ \eta(\lambda):= \pi(\lambda)\ast \delta_{s(\lambda)}, \qquad\text{where } s(\lambda):=-\int_{B_U} u\, \lambda(\d u),\]
has characteristic function given by \eqref{eq.char-lambda}.
Here, $\ast$ denotes convolution, and $\lambda^{\ast k}$ denotes the $k$-fold convolution of $\lambda$ with itself.

\begin{theorem}[\hbox{\cite[Theorem 5.4.8]{Linde}}]\label{th.Levy-equivalence}
	Let $\lambda$ be a $\sigma$-finite measure measure on $\Borel(U)$ satisfying $\lambda(\{0\})=0$. The following assertions are equivalent:
	\begin{enumerate}
		\item[\rm(a)] $\lambda$ is a L\'evy measure;
		\item[\rm(b)] the measure $\lambda|_r^c$ is finite for each $r>0$, and for some (equivalently, for each) sequence $(\delta_k)_{k\in\N}$ decreasing to $0$ the set $\{\eta(\lambda|_{\delta_k}^c):\, k\in\N\}$ is weakly relatively compact.
	\end{enumerate}
\end{theorem}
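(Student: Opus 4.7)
The proof splits into two directions, the harder being (a) $\Rightarrow$ (b). Direction (b) $\Rightarrow$ (a) reduces to extracting a weakly convergent subsequence from $\{\eta(\lambda|_{\delta_k}^c)\}$ and identifying the limit's characteristic function with $\phi$.

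For (a) $\Rightarrow$ (b), the first and principal task is to show $\lambda|_r^c(U)<\infty$ for every $r>0$. The starting point is that $\phi$, being a characteristic function, is continuous at the origin, so the real part of the L\'evy exponent, $-\int_U(1-\cos\scapro{u}{u^\ast})\,\lambda(\d u)$, tends to $0$ as $u^\ast\to 0$. The plan is to average this expression against Lebesgue measure over the small ball $B_F(\delta):=\{u^\ast\in F:\n u^\ast\n<\delta\}$ in a finite-dimensional subspace $F\subseteq U^\ast$ and apply Fubini; a Riemann--Lebesgue-type lower bound of the shape
$$
\int_{B_F(\delta)}\bigl(1-\cos\scapro{u}{u^\ast}\bigr)\,\d u^\ast \;\ge\; c_{F,\delta}\,\1_{A_{F,R}}(u),\qquad A_{F,R}:=\bigl\{u:\sup_{u^\ast\in B_F(1)}\abs{\scapro{u}{u^\ast}}>R\bigr\},
$$
with $R=R(F,\delta)$, then forces $\lambda(A_{F,R})<\infty$ for every such $F$. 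Combining these finite-dimensional bounds with a norming sequence $(u_n^\ast)\subseteq B_{U^\ast}$ satisfying $\n u\n=\sup_n\abs{\scapro{u}{u_n^\ast}}$, supplied by separability of $U$, and a careful exhaustion of $\{\n u\n>r\}$ by the sets $A_{F_n,R}$, delivers $\lambda|_r^c(U)<\infty$. Once this finiteness is in hand, each $\eta(\lambda|_{\delta_k}^c)$ is well-defined as a shifted compound Poisson measure with characteristic function
$$
\phi_k(u^\ast)=\exp\left(\int_{B_{U,\delta_k}^c}\bigl(e^{i\scapro{u}{u^\ast}}-1-i\scapro{u}{u^\ast}\1_{B_U}(u)\bigr)\,\lambda(\d u)\right),
$$
and $\phi_k\to\phi$ pointwise. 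Continuity of $\phi$ at the origin, together with the separable-Banach version of L\'evy's continuity theorem, then yields the required weak relative compactness.

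For (b) $\Rightarrow$ (a), finiteness of $\lambda|_{\delta_k}^c$ makes each $\eta(\lambda|_{\delta_k}^c)$ well-defined with characteristic function $\phi_k$ as displayed above. By the relative compactness hypothesis, some subsequence converges weakly to a probability measure $\varrho$; the associated characteristic functions $\phi_{k_j}$ then converge pointwise to $\phi_\varrho$, and this limit coincides with $\phi$ from \eqref{eq.char-lambda} by construction. Hence $\phi=\phi_\varrho$ is a characteristic function, so $\lambda$ is a L\'evy measure. The main obstacle throughout is the Fourier-averaging step used to establish $\lambda|_r^c(U)<\infty$; without it the claims in (b) would not even be meaningful, and the subsequent appeal to L\'evy's continuity theorem depends on it.
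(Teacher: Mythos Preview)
The paper does not prove this theorem; it is quoted verbatim from Linde's monograph as a known result, so there is no in-paper argument against which to compare your sketch. Your direction (b)$\Rightarrow$(a) is essentially sound, modulo the verification that the integral defining $\phi$ actually converges (which follows from the one-dimensional L\'evy theory applied to the image measures $\langle\lambda,u^\ast\rangle$, using tightness of the weakly relatively compact family).

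For (a)$\Rightarrow$(b), however, the ``careful exhaustion'' step hides a genuine gap. The finite-dimensional averaging trick delivers, for each finite-dimensional $F\subseteq U^\ast$, a bound $\lambda(A_{F,R})<\infty$ with $R=R(F,\delta)$; but the threshold $R$ depends on $\dim F$ through the Riemann--Lebesgue constant, and this constant blows up as $\dim F\to\infty$ (in $\R^d$ the averaged kernel $(1/|B_d(\delta)|)\int_{B_d(\delta)}(1-\cos\langle x,\xi\rangle)\,\d\xi$ exceeds a fixed positive level only once $|x|\delta$ is of order $d$). Taking $F_n=\mathrm{span}(u_1^\ast,\dots,u_n^\ast)$ for a norming sequence therefore yields $\lambda(A_{F_n,R_n})<\infty$ with $R_n\to\infty$, and these sets do not exhaust $\{\|u\|>r\}$. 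Even if one works one direction at a time and exploits norm-continuity of $\phi$ (which does follow from Radonness of $\varrho$) to get a \emph{uniform} bound $\lambda(\{|\langle u,u_n^\ast\rangle|>R\})\le C$ for all $n$, this still does not control $\lambda(\{\|u\|>R\})$: on $\ell^2$ the measure $\sum_n\delta_{2e_n}$ satisfies $\lambda(\{|\langle u,e_n^\ast\rangle|>1\})=1$ for every $n$, yet $\lambda(\{\|u\|>1\})=\infty$. The route taken in Linde is different: one symmetrises and uses tail inequalities of the type $\pi(\nu)(\{\|u\|>r\})\ge c\bigl(1-e^{-\nu(\{\|u\|>2r\})}\bigr)$ relating a compound Poisson measure to its L\'evy measure, together with tightness of the Radon measure $\varrho$, to force $\lambda|_r^c(U)<\infty$ directly.

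A smaller point: your appeal to a ``separable-Banach version of L\'evy's continuity theorem'' is imprecise. Pointwise convergence of characteristic functions to a function continuous at $0$ does \emph{not} by itself give weak relative compactness in infinite dimensions. What saves the argument here is the convolution structure $\eta(\lambda|_{\delta_{k+1}}^c)=\eta(\lambda|_{\delta_k}^c)\ast\eta(\lambda|_{(\delta_{k+1},\delta_k]})$: the sequence is a partial-sum sequence of independent summands whose characteristic functions converge pointwise to that of the Radon measure $\varrho$, and then the It\^o--Nisio theorem (or the shift-tightness results for factors of a fixed Radon measure) supplies the missing tightness. You should invoke that mechanism explicitly rather than a continuity theorem that fails in this generality.
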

In the situation of Theorem \ref{th.Levy-equivalence}, it follows that $\eta(\lambda|_{\delta_k}^c)$ converges weakly to $\eta(\lambda)$; this  follows from Condition (b) and convergence of the corresponding characteristic functions. 

In Hilbert spaces, L\'evy measures can be characterised by an integrability condition: 
\begin{theorem}[\hbox{\cite[Theorem  VI.4.10]{Parthasarathy}}]\label{thm:LevyHilbert} 
	Let $H$ be a separable Hilbert space.
	A $\sigma$-finite measure $\lambda$ satisfying $\la(\{0\})=0$ on $\Borel(H)$ is a L\'evy measure if and only if
	$$ \int_{H} (\n u\n^2\wedge 1) \, \lambda(\d u) < \infty.$$
\end{theorem}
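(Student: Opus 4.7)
The plan is to use Theorem~\ref{th.Levy-equivalence} as a bridge and to decompose $\la=\la|_1+\la|_1^c$, so the equivalence reduces to the small-jump part. Indeed, $\int_{H}(\n u\n^2\wedge 1)\,\la(\d u)<\infty$ is equivalent to $\int_{B_{H,1}}\n u\n^2\,\la(\d u)<\infty$ together with $\la(B_{H,1}^c)<\infty$; moreover $\la(B_{H,1}^c)<\infty$ is automatic for every L\'evy measure by Theorem~\ref{th.Levy-equivalence} and is equivalent to $\la|_1^c$ being a finite (hence automatically L\'evy) measure. Since both conditions are additive over this decomposition, it suffices to prove: $\la|_1$ is a L\'evy measure if and only if $\int_{B_{H,1}}\n u\n^2\,\la(\d u)<\infty$.

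For the sufficiency direction, assume $\int_{B_{H,1}}\n u\n^2\,\la(\d u)<\infty$ and approximate $\la|_1$ by the finite measures $\mu_n:=\la|_{B_{H,1}\setminus B_{H,1/n}}$. Set $\eta_n:=\eta(\mu_n)$ and let $X_n$ have law $\eta_n$. Pointwise convergence of $\phi_{\eta_n}$ to the expression \eqref{eq.char-lambda} with $\la|_1$ in place of $\la$ follows from dominated convergence and the elementary bound $|e^{ix}-1-ix|\le \tfrac12 x^2$. For tightness, fix an orthonormal basis $(e_j)_{j\in\N}$ of $H$; since $X_n$ has mean zero by construction, the centred compound Poisson identity gives $\EE\scaprob{X_n}{e_j}^2=\int \scaprob{u}{e_j}^2\,\mu_n(\d u)$, and Parseval yields
\[
\sum_{j\in\N}\sup_{n\in\N}\EE\scaprob{X_n}{e_j}^2\le\int_{B_{H,1}}\sum_{j\in\N}\scaprob{u}{e_j}^2\,\la(\d u)=\int_{B_{H,1}}\n u\n^2\,\la(\d u)<\infty.
\]
This is the standard criterion for tightness of a sequence of probability measures on the separable Hilbert space $H$. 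Prokhorov's theorem combined with the L\'evy continuity theorem then produces a weak limit with the correct characteristic function, so $\la|_1$ is a L\'evy measure.

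For the necessity direction, assume $\la|_1$ is a L\'evy measure and set $\eta:=\eta(\la|_1)$. Symmetrisation eliminates the drift: let $\la^-$ denote the reflection $\la^-(A):=\la|_1(-A)$ and set $\nu:=\la|_1+\la^-$, a symmetric measure supported in $B_{H,1}$. Then $\eta(\nu)$ is a Borel probability measure with $|\phi_\eta|^2=\exp\bigl(-\int_{B_{H,1}}(1-\cos\scaprob{u}{u^\ast})\,\nu(\d u)\bigr)$. Using $1-\cos x\ge c x^2$ for $|x|\le 1$ and evaluating at $u^\ast=e_j$ for an orthonormal basis element yields
\[
\int_{B_{H,1}}\scaprob{u}{e_j}^2\,\nu(\d u)\le -c^{-1}\log|\phi_\eta(e_j)|^2.
\]
The main obstacle, specific to infinite dimensions, is to sum these bounds in $j$ to a finite quantity: mere continuity of $\phi_\eta$ at zero does not suffice. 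To overcome this I would invoke Sazonov's theorem (see~\cite[Ch.~VI]{Parthasarathy}), which characterises characteristic functions on a Hilbert space as positive-definite functions continuous at the origin in the nuclear-operator topology. Choosing a nuclear operator $T$ adapted to this continuity and working with its diagonalising orthonormal basis $(e_j)$ (whose eigenvalues are summable) delivers the summability $\sum_j(-\log|\phi_\eta(e_j)|^2)<\infty$. Parseval then gives $\int_{B_{H,1}}\n u\n^2\,\nu(\d u)<\infty$, and hence $\int_{B_{H,1}}\n u\n^2\,\la(\d u)<\infty$ since $\n u\n^2$ is invariant under reflection.
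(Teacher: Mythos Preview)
The paper does not prove Theorem~\ref{thm:LevyHilbert}; it is quoted from \cite[Theorem VI.4.10]{Parthasarathy} without argument, so there is no proof in the paper to compare against.

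On the merits: your sufficiency direction is correct. The identity $\EE\scaprob{X_n}{e_j}^2=\int\scaprob{u}{e_j}^2\,\mu_n(\d u)$ for centred compound Poisson variables is right, and the bound $\sum_{j}\sup_n\EE\scaprob{X_n}{e_j}^2<\infty$ does give tightness in a separable Hilbert space (it forces the tails $\sum_{j>N}\EE\scaprob{X_n}{e_j}^2$ to be uniformly small, which together with the uniform second-moment bound is the standard criterion).

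Your necessity argument has a real gap at the Sazonov step. Sazonov continuity of $|\phi_\eta|^2=e^{-\psi}$ at the origin gives, for each $\varepsilon>0$, a positive trace-class $T$ such that $\psi(h)<\varepsilon$ whenever $\scaprob{Th}{h}<1$. Diagonalising $T$ with eigenvalues $(t_j)$, this yields $\psi(e_j)<\varepsilon$ once $t_j<1$, i.e.\ \emph{boundedness} of $\psi(e_j)$, not summability. To get a bound of order $t_j$ you would have to apply the continuity to the rescaled vector $e_j/\sqrt{t_j}$; but then the inequality $1-\cos x\ge c\,x^2$ only holds for $|x|\le 1$, and on the set $\{u:|\scaprob{u}{e_j}|>\sqrt{t_j}\}$ you recover at best a bound by $\nu(B_{H,1})$, which may be infinite. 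The argument as written does not close, and I do not see a quick repair along these lines.

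A cleaner route---and exactly the mechanism the paper uses in the ``only if'' parts of Theorems~\ref{th.Bochner-Levy-measure} and~\ref{th.UMD-Levy-measure}---avoids Sazonov entirely. With $X_k$ defined from the annuli $D_k=\{1/k<\|u\|\le 1\}$ as in your sufficiency argument, Theorem~\ref{th.Levy-equivalence} together with L\'evy's theorem in Banach spaces gives $X_k\to X$ almost surely for some $X\sim\eta(\lambda|_1)$. By \cite[Corollary 3.3]{deAcosta-1980} one has $\E\|X\|^2<\infty$, and \cite[Corollary 3.3]{Hoffmann-Jorgensen-sums} then upgrades the almost sure convergence to convergence in $L^2(\Omega;H)$. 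Since, by the same compound Poisson identity you used above,
\[
\E\|X_k\|^2=\sum_j\E\scaprob{X_k}{e_j}^2=\sum_j\int_{D_k}\scaprob{u}{e_j}^2\,\lambda(\d u)=\int_{D_k}\|u\|^2\,\lambda(\d u),
\]
monotone convergence yields $\int_{B_{H,1}}\|u\|^2\,\lambda(\d u)=\lim_k\E\|X_k\|^2=\E\|X\|^2<\infty$.
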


\subsection{Poisson random measures}\label{ss.PRM-Poisson}

L\'evy measures can be characterised by integrability properties of Poisson random measures, which we introduce in the following. For this purpose, let $(\Om,\cF,\PP)$ be a
probability space and let $(E,\cE)$ be a measurable space.
An \emph{integer-valued random measure} is a mapping
$N: \Omega\times\cE\to \N \cup\{\infty\}$
with the following properties:
\begin{enumerate}
	\renewcommand{\labelenumi}{\rm(\roman{enumi})}
	\item For all $B\in\cE$ the mapping $N(B): \omega\mapsto N(\om,B)$ is measurable;
	\item For all $\om\in\Om$ the mapping $N_\omega: B\mapsto N(\om,B)$ is a measure.
\end{enumerate}
The measure $\nu$ on $(E,\mathscr{E})$ defined by
$$\nu(B):= \E (N(B)), \quad B\in \mathscr{E},$$ is called the \emph{intensity measure} of $N$.
An integer-valued random measure $N : \Omega\times\cE\to\N \cup\{\infty\} $
with $\sigma$-finite intensity measure $\nu$ is called a {\em Poisson random measure} (see \cite[Chapter 6]{Cin}) if the
following conditions are satisfied:
\begin{enumerate}
	\renewcommand{\labelenumi}{\rm(\roman{enumi})}
	\addtocounter{enumi}{2}
	\item For all $B \in \cE$ the random variable $N(B)$ is
	Poisson distributed with parameter $\nu(B)$;
	\item For all finite collections of pairwise disjoint sets $B_1,\ldots,B_n$ in $\cE$ the random variables
	$N(B_1)$, \ldots\,, $N(B_n)$ are independent.
\end{enumerate}
In the converse direction,
if $\nu$ is a $\sigma$-finite measure on $\cE$, then by \cite[Prop.\ 19.4]{Sato} there exists a
probability space $(\Om,\cF,\PP)$ and a Poisson random measure $N:\Omega\times\cE\to \N\cup\{\infty\}$ with intensity measure $\nu$.

The {\em Poisson integral} of a measurable function $F: E \to [0,\infty)$ with respect to the Poisson random measure $N$ is the random variable $\int_E F\,\d N$ defined pathwise by
$$ \left(\int_E F(\sigma) \, N(\d \sigma)\right)(\omega) : =  \int_E F(\sigma)\,\d N_\om(\d\sigma),$$
where $N_\om$ is the $\N\cup\{\infty\}$-valued measure of part (ii) of the above definition.

If $N:\Omega\times \cE \to \N\cup\{\infty\}$ is a Poisson random measure with intensity measure $\nu$,
the {\em compensated Poisson random measure} is defined, for $B\in \cE$ with $\nu(B)<\infty$, by
\begin{align*}
	\cp(B):=N(B)-\nu(B).
\end{align*}

In what follows we consider the special case where $E = I\times U$, where $I$ is an interval in $\R_+$ and $U$ is a separable Banach space, and consider Poisson random measures $N:\Omega\times \Borel(I\times U) \to \N\cup\{\infty\}$ whose intensity measure is of the form
$$\nu = \leb\otimes \lambda,$$
where $\leb$ is the Lebesgue measure on the Borel $\sigma$-algebra $\Borel(I)$ and $\lambda$ a $\sigma$-finite measure satisfying $\lambda(\{0\}) = 0$
on the Borel $\sigma$-algebra $\Borel(U)$. {\em These assumptions will always be in force and will not be repeated at every instance.}
The compensated Poisson random measure is then given, for all $t>0$ and all $B\in \Borel(U)$ with $\lambda(B)<\infty$, by
\begin{align*}
	\cp(t,B):=N(t,B)-t\lambda (B),
\end{align*}
using the shorthand notation $$N(t,B):= N((0,t]\times B).$$

For fixed $t>0$, a {\em simple function} with values in another Banach space $V$ is a function $F\colon  (0,t]\times U\to V$ of the form
\begin{align}\label{eq.simple-function}
	F=\sum_{i=1}^m\sum_{j=1}^n  \1_{ (t_i,t_{i+1}]\times B_j}\otimes\, v_{i,j},
\end{align}
where $0=t_1<\dots < t_{m+1}=t$, $v_{i,j}\in V$, and the disjoint sets $B_j\in\Borel(U)$ satisfy $\lambda(B_j)<\infty$ for $i=1,\dots, m$ and $j=1,\dots, n$. Here, and
in what follows, we use the notation $\1_{F}\otimes v$ for the function $t\mapsto \1_F(t)v$.Given $B\in\Borel(U) $, the {\em compensated Poisson integral over $(0,t]\times B$} of a simple function $F\colon (0,t] \times  U\to V$ of the above form is the $V$-valued random variable
\begin{align*}
	I_{B}(F):=\int_{(0,t]\times B} F(s,u)\, \cp(\d s, \d u):=
	\sum_{i=1}^m\sum_{j=1}^n   \cp\big( (t_i\wedge t, t_{i+1}\wedge t],\, B_j\cap B    \big)\otimes v_{i,j}.
\end{align*}
A strongly measurable
function $F\colon (0,t]\times U\to V$ is said to be {\em integrable with respect to $\cp$} if there exists a sequence of simple functions $F_n\colon (0,t]\times  U\to V$ such that
\begin{enumerate}
	\item[(a)] $F_n\to F$  pointwise $(\leb\otimes\lambda)$-almost everywhere;
	\item[(b)] for any $B\in\Borel(U)$,  the sequence $(I_{B}(F_n))_{n\in\N}$ converges in probability as $n\to \infty$.
\end{enumerate}
We say that $F$ is {\em $L^p$-integrable with respect to $\cp$}, where $p\in [1,\infty)$, when the simple functions can be chosen in such a way that $I_{B}(F_n) \in L^p(\Om;V)$ for all $n\in\N$ and the convergence in (b) takes place with respect to the norm of $L^p(\Om;V)$. Here, $L^p(\Om;V)$ denotes the Bochner space of (equivalence classes of) $V$-valued random variables $X$ with $\E(\norm{X}^p)<\infty$; here and in what follows, $V$-valued random variables are always assumed to be strongly $\mathbb{P}$-measurable, i.e., they are $\mathbb{P}$-almost sure limits of a sequence of simple $V$-valued functions (cf. \cite[Chapter 1]{Hytonen-etal}).

It is easily checked that the limit of the sequence $(I_{B}(F_n))_{n\in\N}$ is well defined in the sense that it does not depend on the choice of the approximating sequence $(F_n)_{n\in\N}$.
In this situation, the limit is defined as
\begin{align*}
	I_{B}(F)&:=\int_{(0,t]\times B} F(s,u)\, \cp(\d s,\d u)\\ &:=\lim_{n\to\infty} \int_{(0,t]\times B} F_n(s,u)\, \cp(\d s,\d u) = \lim_{n\to\infty}I_{B}(F_n).
\end{align*}
If $F$ is $L^p$-integrable with respect to $\widetilde N$, then the limit $I_{B}(F)$ belongs to $L^p(\Om;V)$.

For
$V=\R$, the space of integrable functions can be explicitly characterised as follows.

\begin{theorem}[\hbox{\cite[Lemma 12.2 and 12.3]{Kallenberg}, \cite{Rajput-Rosinski}}]\label{th.PRM}
	Let $U$ be a separable Banach space, and consider
	a measurable function $F\colon (0,t]\times U\to \R$ for some $t>0$.
	\begin{enumerate}[\rm(1)]
		\item $F$ is integrable with respect to $N$ if and only if
		$$
		\int_{(0,t]\times U} (\abs{F(s,u)}\wedge 1)\, \d s\,\lambda(\d u)<\infty.$$
		\item
		$F$ is integrable with respect to $\cp$ if and only if
		$$
		\int_{(0,t]\times U} (\abs{F(s,u)}\wedge \abs{F(s,u)}^2)\, \d s\,\lambda(\d u)<\infty.$$
	\end{enumerate}
	In the second case, the characteristic function $\phi_{I_{B}(F)}\colon\R\to \C$ of the real-valued random variable $I_{B}(F)$ is given, for $\beta\in\R$, by
	$$
	\phi_{I_{B}(F)}(\beta)=
	\exp\left( \int_{(0,t]\times B}\left( e^{i\beta F(s,u)}-1-i\beta F(s,u)\right)\, \lambda(\d u)\,\d s\right).
	$$
\end{theorem}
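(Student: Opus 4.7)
The plan is to split $F$ into a large-jump part on $\{|F|>1\}$ and a small-jump part on $\{|F|\le 1\}$, handle each piece by approximation from simple functions, and compute characteristic functions explicitly in the simple case before passing to the limit. Everything rests on the fact that for simple $F_n$ of the form \eqref{eq.simple-function} the integrals $\int F_n\,\d N$ and $I_B(F_n)$ reduce to finite sums in the independent Poisson random variables $N((t_i,t_{i+1}]\times B_j)$, on which elementary moment and characteristic-function computations are available.

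For part (1), on $\{|F|>1\}$ the Poisson random variable $N((0,t]\times\{|F|>1\})$ has parameter $\int\1_{\{|F|>1\}}\,\d s\,\lambda(\d u)$, so a.s.\ finiteness of $\int F\,\d N$ forces this parameter to be finite and leaves only finitely many non-zero contributions. On $\{|F|\le 1\}$ I would approximate $|F|\1_{\{|F|\le 1\}}$ monotonically from below by nonnegative simple functions $F_n$, use the Campbell identity $\E\int F_n\,\d N=\int F_n\,\d s\,\lambda(\d u)$, and pass to the limit by monotone convergence to see that $\int|F|\1_{\{|F|\le 1\}}\,\d s\,\lambda(\d u)<\infty$ is equivalent to a.s.\ finiteness of $\int|F|\1_{\{|F|\le 1\}}\,\d N$. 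For part (2), the large-jump part reduces to part (1) because the compensator on $\{|F|>1\}$ is deterministic, so integrability with respect to $\cp$ there is equivalent to integrability with respect to $N$. On $\{|F|\le 1\}$ the key tool is the $L^2$-isometry $\E|I_B(F_n)|^2=\int|F_n|^2\1_B\,\d s\,\lambda(\d u)$ for simple $F_n$, obtained from the orthogonality of increments of $\cp$ on disjoint sets; this renders $L^2$-approximability by simple functions equivalent to $\int|F|^2\1_{\{|F|\le 1\}}\,\d s\,\lambda(\d u)<\infty$. Combining the two regimes yields the stated condition $\int(|F|\wedge|F|^2)\,\d s\,\lambda(\d u)<\infty$.

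For the characteristic-function formula, on a simple $F$ the independence of $\cp$ on disjoint sets together with the identity $\E e^{i\beta\cp(A)}=\exp(\nu(A)(e^{i\beta}-1-i\beta))$, which follows directly from the Poisson distribution of $N(A)$, gives the formula immediately. For general $F$ I would apply the elementary bound $|e^{i\beta x}-1-i\beta x|\le C_\beta(|x|\wedge|x|^2)$ together with dominated convergence to pass to the limit inside the exponent along an approximating sequence $F_n\to F$ in the sense required, and then invoke L\'evy's continuity theorem to identify the limit of $\phi_{I_B(F_n)}$ with $\phi_{I_B(F)}$. The main obstacle I anticipate is the necessity direction in both parts: showing that failure of the integrability condition genuinely obstructs convergence in probability of $I_B(F_n)$, not merely $L^p$-convergence. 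In part (1) this is handled by a Borel--Cantelli argument on the dyadic shells $\{2^{-(k+1)}<|F|\le 2^{-k}\}$, forcing infinitely many Poisson atoms to contribute; in part (2) I would exploit the explicit characteristic-function formula for simple approximants and observe that convergence in probability of $I_B(F_n)$ forces pointwise convergence of the real part of the exponent in the formula, which then forces finiteness of the small-jump quadratic integral.
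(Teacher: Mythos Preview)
The paper does not give its own proof of this theorem; it is quoted verbatim from Kallenberg \cite[Lemmas 12.2 and 12.3]{Kallenberg} and Rajput--Rosi\'nski \cite{Rajput-Rosinski} as a known result, with no argument supplied. There is therefore nothing in the paper to compare your proposal against.

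That said, your sketch follows the standard route taken in those references: split $F$ according to $\{|F|\le 1\}$ and $\{|F|>1\}$, use Campbell's formula for the first moment on the large-jump part and the $L^2$-isometry $\E|I_B(F_n)|^2=\int|F_n|^2\1_B\,\d s\,\lambda(\d u)$ on the small-jump part, and verify the characteristic-function formula first for simple functions before passing to the limit via dominated convergence in the exponent. One small point: in the necessity direction of part (2) you say that convergence in probability of $I_B(F_n)$ forces ``pointwise convergence of the real part of the exponent'', which then yields the quadratic bound. This is correct but deserves a line of care: convergence in probability gives convergence of characteristic functions, hence boundedness of $\beta\mapsto\int_B(1-\cos(\beta F))\,\d s\,\lambda(\d u)$; to extract $\int_{\{|F|\le 1\}}|F|^2\,\d s\,\lambda(\d u)<\infty$ one then either integrates in $\beta$ over a neighbourhood of the origin or invokes the elementary bound $1-\cos x\ge c x^2$ for $|x|\le 1$. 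With that addition the argument is complete and matches what one finds in the cited sources.
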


This theorem has the following straightforward vector-valued corollary.

\begin{corollary}\label{co.integrable-cp}
	If $F\colon (0,t]\times U\to V$ is integrable with respect to $\cp$, then for all  $B\in\Borel(U)$ the characteristic function $\phi_{I_{B}(F)}\colon V^\ast\to \C$ of $I_{B}(F)$ is given, for $v^\ast\in V^\ast$, by
	$$
	\phi_{I_{B}(F)}(v^\ast)=
	\exp\left( \int_{(0,t]\times B}\left( e^{i\scapro{F(s,u)}{v^\ast}}-1-i\scapro{F(s,u)}{v^\ast} \right)\, \lambda(\d u)\,\d s\right).
	$$
	
\end{corollary}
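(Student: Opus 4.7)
The plan is to reduce everything to the scalar-valued Theorem~\ref{th.PRM} by pairing with an arbitrary functional $v^\ast\in V^\ast$ and exploiting the defining approximation of $I_B(F)$ by simple-function integrals. By definition of $V$-valued integrability with respect to $\cp$, there is a sequence of simple functions $F_n\colon(0,t]\times U\to V$ converging to $F$ pointwise $(\leb\otimes\lambda)$-a.e.\ such that $I_B(F_n)\to I_B(F)$ in probability.

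Fix $v^\ast\in V^\ast$ and set $f_n:=\scapro{F_n}{v^\ast}$ and $f:=\scapro{F}{v^\ast}$. Each $f_n$ is a scalar simple function (in the sense of \eqref{eq.simple-function}), and directly from the definition of $I_B$ on simple functions one has the linearity identity
\[
 \scapro{I_B(F_n)}{v^\ast}=\int_{(0,t]\times B}f_n(s,u)\,\cp(\d s,\d u).
\]
Moreover $f_n\to f$ pointwise $(\leb\otimes\lambda)$-a.e., and by continuity of $\scapro{\cdot}{v^\ast}$ the left-hand side converges in probability to $\scapro{I_B(F)}{v^\ast}$. Hence the scalar integrals of the $f_n$ converge in probability, which means $f$ is integrable with respect to $\cp$ in the sense of Theorem~\ref{th.PRM}(2), with
\[
 I_B(f)=\scapro{I_B(F)}{v^\ast}.
\]

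Now I apply Theorem~\ref{th.PRM} to $f$ with $\beta=1$. The scalar characteristic function formula gives
\[
 \E\exp\bigl(i\scapro{I_B(F)}{v^\ast}\bigr)
 =\E\exp\bigl(iI_B(f)\bigr)
 =\exp\!\left(\int_{(0,t]\times B}\!\left(e^{i\scapro{F(s,u)}{v^\ast}}-1-i\scapro{F(s,u)}{v^\ast}\right)\lambda(\d u)\,\d s\right),
\]
which is precisely the asserted expression for $\phi_{I_B(F)}(v^\ast)$.

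The only non-routine point in this sketch is checking that the scalar sequence $(f_n)$ qualifies as an approximating sequence in the sense of the scalar integrability definition; this is immediate since pointwise convergence a.e.\ and convergence in probability of the integrals are preserved under the continuous linear map $\scapro{\cdot}{v^\ast}$. Everything else is bookkeeping.
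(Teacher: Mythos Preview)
Your proof is correct and follows essentially the same approach as the paper's own proof: both reduce to the scalar case by pairing with $v^\ast$, verify via the simple-function identity $\scapro{I_B(F_n)}{v^\ast}=I_B(\scapro{F_n}{v^\ast})$ that $\scapro{F}{v^\ast}$ is integrable with respect to $\cp$ with integral $\scapro{I_B(F)}{v^\ast}$, and then invoke Theorem~\ref{th.PRM} at $\beta=1$. The paper makes the ``for all $B$'' part of the scalar integrability definition slightly more explicit, but your final paragraph covers this adequately.
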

\begin{proof}
	We choose a sequence $(F_n)_{n\in\N}$ of simple functions $F_n\colon (0,t]\times U\to V$ converging to $F$ in $V$ $(\leb\otimes\lambda)$-almost everywhere such that the sequence $(I_{B}(F_n))_{n\in\N}$ converges in probability for all $B\in\Borel(U)$. Denoting the limit by $I_{B}(F)$, for fixed  $B\in\Borel(U)$ and $v^\ast\in V^\ast$, it follows that $\scapro{F_n(\cdot,\cdot)}{v^\ast}$ converges to $\scapro{F(\cdot,\cdot)}{v^\ast}$  $(\leb\otimes\lambda)$- almost everywhere in $(0,t]\times U$, and from
	$$
	\int_{(0,t]\times B}\scapro{F_n(s,u)}{v^\ast}\,
	\cp(\d s, \d u) = \Big\langle  \int_{(0,t]\times B} F_n(s,u)\,
	\cp(\d s, \d u) , v^\ast \Big\rangle,
	$$
	it follows that the sequence $(I_{B}(\scapro{F_n(\cdot,\cdot)}{v^\ast}))_{n\in\N}$ converges in probability to the real-valued random variable $\scapro{I_{B}(F)}{v^\ast}$.
	We conclude that $\scapro{F(\cdot,\cdot)}{v^\ast}$ is integrable with respect to $\cp$ and,
	for all  $B\in\Borel(U)$,
	\begin{align}\label{eq.integral-commuting}
		\int_{(0,t]\times B}\scapro{F(s,u)}{v^\ast}\,
		\cp(\d s, \d u)
		=\Big\langle  \int_{(0,t]\times B} F(s,u)\,
		\cp(\d s, \d u) , v^\ast\Big\rangle.
	\end{align}
	Theorem \ref{th.PRM}
	implies that the characteristic function of the real-valued random variable $I(v^\ast):= I_{B}(\scapro{F(\cdot,\cdot)}{v^\ast})$
	is, for $\beta\in\R$, given by
	$$
	\phi_{I(v^\ast)}(\beta)=\exp\left( \int_{(0,t]\times B}\left( e^{i\beta \scapro{F(s,u)}{v^\ast}}-1-i\beta \scapro{F(s,u)}{v^\ast}\right)\, \lambda(\d u)\,\d s\right).
	$$
	Letting $I:=I_{B}(F)$, it follows from \eqref{eq.integral-commuting} that
	$$\phi_I(v^\ast)=\EE[e^{i\scapro{I}{v^\ast}}]=\EE[e^{i I(v^\ast)}]=\phi_{I(v^\ast)}(1).$$
	This completes the proof.
\end{proof}

It is worth pointing out that by choosing $U=V$, $t=1$, $B=B_U$ and $F(s,u)=u$ for all $s\in (0,1]$ and $u\in B_U$, then the distribution of $I_B(F)$ is $\mu(\lambda|_1)$. This will be used in the proof of Theorem \ref{th.Bochner-Levy-measure}.

\begin{lemma}\label{le.integrability-L1}
	Every function $F\colon (0,t]\times U\to V$ belonging to $L^1_{\leb\otimes\lambda}((0,t] \times\, U;V)$ is integrable with respect to $\cp$.
\end{lemma}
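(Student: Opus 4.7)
The plan is to approximate $F$ by simple functions in $L^1_{\leb \otimes \lambda}((0,t] \times U; V)$ and to show that their compensated Poisson integrals form a Cauchy sequence in $L^1(\Om; V)$, which in particular yields convergence in probability.

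First I would argue that $F$ can be approximated by simple functions of the form \eqref{eq.simple-function}. Since $F$ is strongly measurable and Bochner integrable, since $\lambda$ is $\sigma$-finite, and since rectangles $(a,b] \times B$ with $\lambda(B) < \infty$ generate $\Borel((0,t] \times U)$, a standard approximation yields simple functions $F_n$ of the form \eqref{eq.simple-function} (with pairwise disjoint spatial parts of finite $\lambda$-measure) such that $F_n \to F$ in $L^1_{\leb\otimes\lambda}$. By passing to a subsequence I may assume $F_n \to F$ pointwise $(\leb \otimes \lambda)$-almost everywhere, verifying condition (a) of the definition.

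The key estimate is the $L^1$-bound
\begin{align*}
    \EE \norm{I_B(G)} \le 2 \int_{(0,t] \times U} \norm{G(s,u)}\, \d s\, \lambda(\d u)
\end{align*}
for any simple function $G = \sum_{i,j} \1_{(t_i,t_{i+1}] \times B_j} \otimes v_{i,j}$ of the form \eqref{eq.simple-function} and any $B \in \Borel(U)$. To prove this, note that each increment $\cp((t_i \wedge t, t_{i+1} \wedge t], B_j \cap B)$ equals $X_{i,j} - \mu_{i,j}$, where $X_{i,j}$ is Poisson with parameter $\mu_{i,j} := ((t_{i+1} \wedge t) - (t_i \wedge t)) \lambda(B_j \cap B)$. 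The trivial bound $\EE \abs{X - \mu} \le \EE X + \mu = 2\mu$ for a Poisson variable $X$ with parameter $\mu$, combined with the triangle inequality applied to the finite sum defining $I_B(G)$, delivers the claim, since the disjointness of the $B_j$ makes $\sum_{i,j} \mu_{i,j} \norm{v_{i,j}} = \norm{G}_{L^1_{\leb \otimes \lambda}}$.

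Applying this estimate to the difference $F_n - F_m$ (which is itself simple after passing to a common refinement of partitions) gives
\begin{align*}
    \EE \norm{I_B(F_n) - I_B(F_m)} \le 2 \norm{F_n - F_m}_{L^1_{\leb \otimes \lambda}},
\end{align*}
so $(I_B(F_n))_{n\in\N}$ is Cauchy, hence convergent, in $L^1(\Om; V)$, and therefore convergent in probability. This verifies condition (b) and completes the proof. There is no serious obstacle here; the only mild technicality is checking that the approximating sequence can be taken in the precise product form of \eqref{eq.simple-function}, which follows from the $\sigma$-finiteness of $\lambda$ and the standard generation of product Borel sets by rectangles.
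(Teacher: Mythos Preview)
Your proof is correct and follows essentially the same approach as the paper: both establish the key $L^1$-bound $\EE\norm{I_B(G)} \le 2\norm{G}_{L^1_{\leb\otimes\lambda}}$ for simple $G$ via the trivial Poisson estimate $\EE|X-\mu|\le 2\mu$, and then pass to the limit along an $L^1$-approximating sequence that also converges pointwise almost everywhere. The only (harmless) slip is that $\sum_{i,j}\mu_{i,j}\norm{v_{i,j}}$ equals $\int_{(0,t]\times B}\norm{G}$ rather than the full $L^1$-norm over $(0,t]\times U$, but this only strengthens the inequality you need.
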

\begin{proof}
	Let $F\colon (0,t]\times\, U\to V$ be a simple function of the form \eqref{eq.simple-function}. Recalling that $\la$ is the intensity measure of $N$, it follows for any $B\in\Borel(U)$ that
	\begin{equation}\label{eq.expectation-simple-integral}\begin{aligned}
			&\EE\left[ \left\|\int_{(0,t]\times B} F(s,u)\, \cp(\d s,\d u)\right\|\right]\\
			&\qquad\qquad \le 	\sum_{i=1}^m\sum_{j=1}^n \EE\left[  N\big( (t_i\wedge t, t_{i+1}\wedge t],\, B_j\cap B    \big)\right]
			\norm{v_{i,j}}
			\\ & \qquad\qquad\qquad\qquad + \big( t_{i+1}\wedge t- t_i\wedge t) \lambda(B_j\cap B)
			\norm{v_{i,j}}\\
			& \qquad\qquad = 2 	\sum_{i=1}^m\sum_{j=1}^n \big( t_{i+1}\wedge t- t_i\wedge t) \lambda(B_j\cap B)
			\norm{v_{i,j}}\\
			&\qquad\qquad = 2\int_{(0,t]\times B} \norm{F(s,u)}\, \lambda(\d u)\, \d s.
		\end{aligned}
	\end{equation}
	Now let  $F\colon (0,t]\times U\to V$ be an arbitrary function in $L^1_{\leb\otimes\lambda}((0,t]\times U;V)$. Then there exists a sequence $(F_n)_{n\in\N}$ of simple functions converging to $F$ in $L_{\leb\otimes \lambda}^1((0,t]\times U;V)$; by a routine argument, we may assume that this sequence also converges to $F$ pointwise $(\leb\otimes\lambda)$-almost everywhere in $V$. Since \eqref{eq.expectation-simple-integral} shows that the integrals $I_{t,B}(F_n)$ converge in mean and thus in probability, it follows that $F$ is integrable with respect to $\cp$.
\end{proof}

\subsection{$\gamma$-Radonifying operators}

Consider a Hilbert space $H$ with inner product $(\cdot|\cdot)$ and a Banach space $V$, and denote by $\mathscr{L}(H,V)$ the space of bounded linear operators from $H$ to $V$. The subspace in $\mathscr{L}(H,V)$ consisting of all finite rank operators from $H$ to $V$ is denoted by $H\otimes V$. We will use the notation
$h\otimes v$ for the rank one operator that sends an element $h'\in H$ to the element $(h'|h) v\in V$. By a standard orthogonalisation argument, any finite rank operator $T\in\mathscr{L}(H,V)$ can be expressed as
$$ T = \sum_{n=1}^N h_n\otimes v_n, $$
where $N\ge 1$, the sequence $(h_n)_{n=1}^N$ is orthonormal in $H$, and $(v_n)_{n=1}^N$ is some sequence in $V$.
We introduce $\gamma(H,V)$ as the completion of the space of finite rank operators from $H$ to $V$ under the norm
$$ \left\| \sum_{n=1}^N h_n\otimes v_n\right\|_{\gamma(H,V)}^2 := \E \left\| \sum_{n=1}^N \gamma_n\otimes v_n\right\|^2 ,$$
where $(\gamma_n)_{n=1}^N$ is a sequence of independent, real-valued, standard normally distributed random variables.
This norm is independent of the particular representation of the operator as a sum of finite rank operators, provided the sequence $(h_n)_{n=1}^N$ used in the representation is orthonormal in $H$.
The identity mapping $h\otimes v\mapsto h\otimes v$ is extended to a contractive embedding from $\gamma(H,V)$ into $\mathscr{L}(H,V)$. Consequently, elements of $\gamma(H,V)$ can be identified with bounded linear operators from $H$ to $V$. These operators are called {\em $\gamma$-radonifying operators}.

For comprehensive insights into $\gamma$-radonifying operators, the reader is referred to \cite[Chapter 9]{Hytonen-etal2} and the review paper \cite{NeeCMA}.

Spaces of $\gamma$-radonifying operators enjoy the following {\em ideal property} (see \cite[Theorem 9.1.10]{Hytonen-etal2}):
Given Hilbert spaces $H_1, H_2$ and Banach spaces $V_1, V_2$, for every $R\in \mathscr{L}(H_1, H_2)$, $S\in \gamma(H_2,V_2)$, and $T\in \mathscr{L}(V_2,V_1)$, it holds that
$TSR\in \gamma(H_1,V_1)$ and
\begin{align}\label{eq:ideal-property}\|TSR\|_{\gamma(H_1,V_1)}\leq \|T\|_{\mathscr{L}(V_2,V_1)} \, \|S\|_{\gamma(H_2,V_2)} \, \|R\|_{\mathscr{L}(H_1,H_2)}.
\end{align}

We will need various other standard results on $\gamma$-radonifying operators; these will be quoted as soon as the need arises. Let us finally give some examples:

\begin{example}[Hilbert spaces]
	When both $H$ and $V$ are Hilbert spaces we have a natural isometric isomorphism $$\gamma(H,V) = \mathscr{L}_2(H,V),$$
	the space of Hilbert-Schmidt operators from $H$ to $V$.
\end{example}

\begin{example}[$L^p$-spaces]
	When $H$ is a Hilbert space and $V = L^p(S,\mu)$, where $(S,\mu)$ is a $\sigma$-finite measure space and $p\in [1,\infty)$,
	the mapping $J:L^p(S,\mu;H)\to \gamma(H,L^p(S,\mu))$ given by  $(Jf)h : =\scapro{f(\cdot)}{h}$
	defines an isomorphism of Banach spaces
	\begin{equation}\label{eq:SqFc}\gamma(H,L^p(S,\mu))\simeq_p L^p(S,\mu;H)
	\end{equation}
	with isomorphism constants only depending on $p$.
	In the particular case when $H$ is an $L^2$-space, the spaces on the right-hand side are
	usually referred to as {\em spaces of square functions} and play a prominent role in Harmonic Analysis.
	It is worth mentioning that the isomorphism \eqref{eq:SqFc} extends to Banach lattices $V$ with finite cotype.
\end{example}

\section{L\'evy measures on $L^p$-spaces}

We will now specialise to $V = L^p_{\mu}:=L^p_{\mu}(S):= L^p(S,\mu)$ for some $p\in (1,\infty)$ and a measure space $(S,\S,\mu)$.
It will always be assumed that $\mu$ is $\sigma$-finite and that $L^p_{\mu}$ is separable; these assumptions are for example satisfied if  the measure space $(S,\S,\mu)$ is {\em $\mu$-countably generated} according to \cite[Proposition 1.2.29]{Hytonen-etal}.
The $\sigma$-finiteness of $\mu$ implies that the various uses of Fubini's theorem in this paper are justified, and also that we may identify the dual space $(L_\mu^p)^*$ with $L_\mu^{p'}$, where $\frac1p+\frac1{p'}=1$ (although, as is well known, $\sigma$-finiteness is not needed for this identification in the regime $p\in (1,\infty)$). The separability of $L_\mu^p$ implies that the mapping $f\mapsto f$, viewed as a function from $(L_\mu^p,\Borel(L_\mu^p))$ to itself, is strongly measurable (by Pettis's measurability theorem, see \cite[Theorem 1.1.6]{Hytonen-etal}).

We recall that when $U$ is a Banach space, $L^p(S;U)$ denotes the Banach space of all (equivalence classes of) strongly $\mu$-measurable $f\colon S\to U$ for which $$\n f\n_{L^p(S;U)} := \left(\int_S \n f(s)\n^p \,\mu(\d s)\right)^{1/p}$$
is finite. We sometimes wish to emphasise the measure $\mu$, in which case we write  $L_\mu^p(S;U)$
instead of $L^p(S;U)$. With this notation, $L_\mu^p(S;\R) = L_\mu^p$.

As before, let $U$ be a separable Banach space, and let $\lambda$ a $\sigma$-finite measure on $\Borel(U)$
which satisfies $\la(\{0\})=0$. Let $N$ denote a Poisson random measure on $\Rp\times U$ with intensity measure $\leb\otimes\lambda$, and let $\cp$ denote the associated compensated Poisson random measure. For fixed $t>0$ and for a simple function $F\colon (0,t]\times U\to L_{\mu}^p(S)$ of the form \eqref{eq.simple-function}, for each  $B\in\Borel(U)$, the compensated Poisson integral
\begin{align*}
	I_{B}(F):=\int_{(0,t]\times B} F(r,u)\, \cp(\d r, \d u)
\end{align*}
is defined as in Subsection \ref{ss.PRM-Poisson}.  As a special case of the result in \cite{Dirksen}, for simple functions $F:(0,t]\times  U\to L^p_\mu(S)$, $B\in\Borel(U)$, exponents $p\in (1,\infty)$,  we have the equivalence of norms
\begin{align}\label{eq.Dirksen}
	\left(\EE\left[ \sup_{0<s\le t}\norm{ \int_{(0,s]\times B} F(r,u)\, \cp(\d r,\d u) }_{L^p_\mu(S)}^p\right]\right)^{1/p}
	\simeq_{p} \norm{F \1_{(0,t]\times B}}_{\I_{p}},
\end{align}
where
\begin{align*}
	\I_p:=
	\begin{cases}
		\S_\lambda^p + \D_\lambda^p & \hbox{if} \ 1<p<2,\\
		\S_\lambda^p\cap \D_\lambda^p &  \hbox{if} \  2\le p<\infty,
	\end{cases}
\end{align*}
with
\begin{align*}
	\S_\lambda^p = L^p_\mu(S,L^2_{\leb\otimes\lambda}(\Rp\times U)), \qquad
	\D_\lambda^p = L^p_{\leb\otimes\lambda}(\Rp\times U;L^p_\mu(S)),
\end{align*}
and
\begin{align*}
	\norm{F}_{\S_\lambda^p + \D_\lambda^p} & :=
	\inf\left\{ \norm{F_1}_{\S_\lambda^p}+\norm{F_2}_{\D_\lambda^p}:
	F=F_1+F_2, \,F_1\in \S_\lambda^p, \,F_2\in \D_\lambda^p\right\},  \\
	\norm{F}_{\S_\lambda^p \cap \D_\lambda^p} & := 	\max\left\{ \norm{F}_{\S_\lambda^p}, \norm{F}_{\D_\lambda^p}\right\}.
\end{align*}
Here, both $\S_\lambda^p$ and $\D_\lambda^p$ are viewed as Banach spaces of (equivalence classes of)
measurable real-valued functions on $\Rp\!\times S\times U$.
Explicitly, the norms in these spaces are defined by
\begin{align*}
	\norm{F}_{\S_\lambda^p}^p &:  = \norm{F}_{L^p_\mu(S,L^2_{\leb\otimes\lambda}(\Rp\times U))}^p =\int_S \left( \int_{(0,\infty)\times U}\abs{F(t,u)(s)}^2\, \lambda(\d u)\,\d t\right)^{p/2}\mu(\d s) ,\\
	\norm{F}_{\D_{\lambda}^p}^p&: = \norm{F}_{L^p_{\leb\otimes\lambda}(\Rp\times U;L^p_\mu(S))}^p =\int_{(0,\infty)\times U} \int_S |F(t,u)(s)|^p\, \mu(\d s) \,  \lambda(\d u)\,\d t,
\end{align*}
where the second identity in the first line follows from \cite[Theorem 17, p. 198]{Dunford-Schwartz-1} or  \cite[Proposition 1.2.25]{Hytonen-etal}, which allow us to take the point evaluation with respect to $s$ inside the integral.

\begin{remark}\label{re.Banach-function}
	For later use  we observe that $ \I_p$ is a Banach function space in the sense of Bennett and Sharpley \cite{BenSha}; this follows from \cite[Problems 4 and 5, page 175]{BenSha}. In particular, if $0\le f\le g$ almost everywhere with $g\in \I_p$, then $f\in \I_p$ and $\norm{f}_{\I_p}\le \norm{g}_{\I_p}$.
	Keeping in mind that we are assuming $p\in (1,\infty)$, the spaces $\I_p$ are reflexive as Banach spaces; this follows from, e.g., \cite[Corollary IV.1.2]{DieUhl} along with the easy fact that if $(X_0,X_1)$ is an interpolation couple of reflexive Banach spaces, then the spaces $X_0\cap X_1$ and $X_0+X_1$ are reflexive. By \cite[Corollary 1.4.4]{BenSha}, this implies that the norm of $\I_p$ is absolutely continuous.
\end{remark}

\begin{lemma}\label{lem:simple-Iq-dense}
	The class of simple functions
	$F\colon \Rp\times U\to L_\mu^p(S)$ is dense in ${\mathcal I}_p$.
\end{lemma}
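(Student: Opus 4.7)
I argue in three stages: truncation via absolute continuity of the $\I_p$-norm, uniform approximation of the truncated function by a step function, and approximation of indicators of Borel sets by finite unions of indicators of rectangles. Convergence in both $\S^p_\lambda$ and $\D^p_\lambda$ will be established at each stage, so that both the sum ($1<p<2$) and the intersection ($p\ge 2$) cases are handled uniformly.

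\textit{Truncation.} Since $\mu$ and $\lambda$ are $\sigma$-finite, choose $U_n\uparrow U$ and $S_n\uparrow S$ with $\lambda(U_n),\mu(S_n)<\infty$. For $F\in\I_p$, set
\[
F_n(t,u,s):=F(t,u,s)\,\1_{\{|F(t,u,s)|\le n\}}\,\1_{(0,n]\times U_n\times S_n}(t,u,s).
\]
The set where $F_n\ne F$ decreases to a $(\leb\otimes\lambda\otimes\mu)$-null set, so by absolute continuity of the $\I_p$-norm (Remark~\ref{re.Banach-function}), $F_n\to F$ in $\I_p$, and it suffices to approximate each $F_n$.

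\textit{Step-function approximation.} Each $F_n$ is bounded by $n$ and supported on $K_n:=(0,n]\times U_n\times S_n$, a set of finite product measure. A standard argument yields, for any $\epsilon>0$, a measurable step function $\tilde F=\sum_k\alpha_k\1_{C_k}$ with $C_k\subset K_n$ Borel and $\|F_n-\tilde F\|_\infty<\epsilon$. Direct estimates give, for any $G$ supported on $K_n$, the bounds $\|G\|_{\D^p_\lambda}\le \|G\|_\infty(n\lambda(U_n)\mu(S_n))^{1/p}$ and $\|G\|_{\S^p_\lambda}\le \|G\|_\infty(n\lambda(U_n))^{1/2}\mu(S_n)^{1/p}$, so $\tilde F\to F_n$ in $\I_p$.

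\textit{Rectangle approximation.} It remains to approximate each $\1_{C_k}$ in $\I_p$ by a function of the form~\eqref{eq.simple-function}. The algebra of finite unions of rectangles $(a,b]\times B\times E$ with $B\in\Borel(U)$, $\lambda(B)<\infty$, $E\in\S$, $\mu(E)<\infty$ generates the product $\sigma$-algebra on $K_n$, so the classical measure-theoretic approximation yields, for any $\epsilon>0$, a finite union $R$ of such rectangles with $(\leb\otimes\lambda\otimes\mu)(C_k\triangle R)<\epsilon$. Writing $G:=\1_{C_k}-\1_R$ one has $\|G\|_{\D^p_\lambda}^p=(\leb\otimes\lambda\otimes\mu)(C_k\triangle R)\to 0$, and a short Hölder computation (treating $p\ge 2$ via $x^{p/2}\le(n\lambda(U_n))^{p/2-1}x$ and $1<p<2$ via Hölder with exponent $2/p$ and the finiteness of $\mu(S_n)$) yields $\|G\|_{\S^p_\lambda}\to 0$ as well. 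Since $\1_{(a,b]\times B\times E}(t,u,s)=\1_{(a,b]\times B}(t,u)\cdot\1_E(s)$ is exactly a summand of~\eqref{eq.simple-function} with coefficient $\1_E\in L^p_\mu(S)$, after refining the base sets in $\Borel(U)$ to be disjoint we obtain the required simple function. The main subtlety is the simultaneous control of both component norms when $p\ge 2$, since $\S^p_\lambda$ is not an ordinary $L^p$-space on the product $\Rp\times U\times S$; the reduction to a support of finite product measure lets Hölder's inequality dominate both norms by the common $L^1(\leb\otimes\lambda\otimes\mu)$-distance, which resolves this point.
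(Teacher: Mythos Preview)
Your argument is correct. The paper's proof is a one-line citation to Bennett--Sharpley, Theorem~3.11, which says that in any Banach function space with absolutely continuous norm the simple functions (finite linear combinations of indicators of sets of finite measure in the underlying space, here $\Rp\times U\times S$) are dense; the absolute continuity of the $\I_p$-norm was already recorded in Remark~\ref{re.Banach-function}. Your approach is the hands-on version of the same idea: you carry out explicitly the truncation, uniform step approximation, and algebra-of-rectangles approximation that underlie the abstract result, and you control both component norms directly rather than invoking the function-space machinery. One thing your proof does that the paper's citation does not address explicitly is the passage from \emph{general} simple functions on the product space to the specific tensor form~\eqref{eq.simple-function} needed to extend the compensated Poisson integral; your rectangle step, together with the observation that $\1_{(a,b]\times B\times E}=\1_{(a,b]\times B}\otimes\1_E$ and the disjoint refinement, makes this explicit. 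The trade-off is brevity against self-containment: the paper's route is much shorter but leans on the Bennett--Sharpley theory, whereas yours is longer but elementary and shows directly why the particular structure~\eqref{eq.simple-function} is dense.
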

\begin{proof}
	This follows from \cite[Theorem 1.3.11]{BenSha}.
\end{proof}

As a consequence of the lemma above, for all $t\in\R_+$ and $B\in \Borel(U)$, the compensated Poisson integral $I_{B}(F)$ can now be defined
by a standard density argument for all  strongly measurable functions $F\colon (0,t]\!\times U\to L_\mu^p(S)$ such that $F\1_{(0,t]\times B}\in  {\mathcal I}_p$.

We now set $U=L_\mu^p(S)$ to obtain the following characterisation of L\'evy measures in $L_\mu^p(S)$.
\begin{theorem}\label{th.Bochner-Levy-measure}
	A $\sigma$-finite measure $\lambda$ on $\Borel(L^p_\mu)$ with $\lambda(\{0\})=0$  is a L\'evy measure if and only if $\lambda|_r^c$ is a finite measure for all $r>0$ and, moreover,
	\begin{enumerate}[\rm (1)]
		\item if $p\in [2,\infty)$, it satisfies
		\begin{align*}
			\max\left\{  \int_{S} \left(\int_{B_{{L_\mu^p}}}  \abs{f(s)}^2\, \lambda(\d f) \right)^{p/2} \mu(\d s)     ,\, \int_{B_{{L_\mu^p}}} \norm{f}^p_{L^p_\mu}\, \lambda(\d f) \right\} <\infty.
		\end{align*}
		\item if $p\in (1,2)$, it satisfies
		\begin{align*}
			\inf\left\{ \int_{\S} \left(\int_{B_{{L_\mu^p}}}  \abs{F_1(f)(s)}^2\, \lambda(\d f) \right)^{p/2} \mu(\d s)
			+  \int_{B_{{L_\mu^p}}} \norm{F_2(f)}^p_{L_\mu^p}\,\lambda(\d f)   \right\}<\infty,
		\end{align*}
		where the infimum is taken over all functions $F_1\in {\mathcal S}^p_\lambda$ and $F_2\in \D_{\lambda}^p$ with $F_1(f)+F_2(f)=f$ for all $f\in B_{{L_\mu^p}}=\{g\in L_\mu^p:\norm{g}_{\L_\mu^p}\le 1\}$.
	\end{enumerate}
\end{theorem}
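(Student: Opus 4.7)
Both conditions in the theorem are precisely the statement that
\[F_0(r,u)(s)\,:=\,u(s)\,\1_{(0,1]}(r)\,\1_{B_{L^p_\mu}}(u)\]
belongs to $\I_p$: for $p\ge 2$ this is $\max\bigl\{\n F_0\n_{\S^p_\lambda}^p,\,\n F_0\n_{\D^p_\lambda}^p\bigr\}<\infty$, i.e.\ (1); for $p\in(1,2)$ it is $\inf\bigl\{\n F_1\n_{\S^p_\lambda}^p + \n F_2\n_{\D^p_\lambda}^p : F_0=F_1+F_2\bigr\}<\infty$, i.e.\ (2). The plan is to combine Dirksen's two-sided equivalence~\eqref{eq.Dirksen}, Corollary~\ref{co.integrable-cp} (identifying characteristic functions of compensated Poisson integrals) and Theorem~\ref{th.Levy-equivalence}.

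\smallskip

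\emph{Sufficient direction.} Assume $\lambda|_r^c$ is finite for every $r>0$ and $F_0\in\I_p$. By Lemma~\ref{lem:simple-Iq-dense} and the Dirksen inequality~\eqref{eq.Dirksen}, the compensated Poisson integral $I := I_{B_{L^p_\mu}}(F_0)$ is well defined in $L^p(\Om;L^p_\mu)$. Corollary~\ref{co.integrable-cp} identifies its characteristic function as the small-jumps factor of~\eqref{eq.char-lambda}. Since $\lambda|_1^c$ is finite, $J := \int_{(0,1]\times B_{L^p_\mu}^c} u\,N(\d r,\d u)$ is almost surely a finite, $L^p_\mu$-valued sum of jumps whose characteristic function equals the large-jumps factor. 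As $I$ and $J$ arise from $\cp$ and $N$ on disjoint sets they are independent, so $I+J$ has characteristic function~\eqref{eq.char-lambda} and $\lambda$ is a L\'evy measure.

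\smallskip

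\emph{Necessary direction.} Assume $\lambda$ is a L\'evy measure. Theorem~\ref{th.Levy-equivalence} gives $\lambda|_r^c$ finite for every $r>0$. For $\delta\in(0,1)$ put $F_\delta := F_0\,\1_{\{\n u\n>\delta\}}$; by Lemma~\ref{le.integrability-L1}, $I_\delta := I_{B_{L^p_\mu}}(F_\delta)$ lies in $L^p(\Om;L^p_\mu)$ and \eqref{eq.Dirksen} yields $(\E\n I_\delta\n^p)^{1/p}\simeq_p \n F_\delta\n_{\I_p}$. Since $\lvert F_\delta\rvert\uparrow \lvert F_0\rvert$ pointwise $(\leb\otimes\lambda)$-almost everywhere as $\delta\downarrow 0$, the Fatou property of the Banach function space $\I_p$ (Remark~\ref{re.Banach-function}) gives $\n F_\delta\n_{\I_p}\nearrow \n F_0\n_{\I_p}$. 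It therefore suffices to prove the uniform bound $\sup_{\delta\in(0,1)}\n F_\delta\n_{\I_p}<\infty$, or equivalently $\sup_\delta\E\n I_\delta\n^p<\infty$.

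\smallskip

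This uniform $L^p$-bound is the main obstacle. Theorem~\ref{th.Levy-equivalence} supplies tightness of $\{\eta(\lambda|_{\delta_k}^c)\}$ along any $\delta_k\downarrow 0$, and hence of $\{I_{\delta_k}\}$, since $\eta(\lambda|_{\delta_k}^c)$ is the law of $I_{\delta_k}+J$ with $J$ of fixed law and independent of $I_{\delta_k}$. Each $I_\delta$ is centered and infinitely divisible, with L\'evy measure supported in the closed unit ball of $L^p_\mu$; moreover, the conditional-Jensen identity $I_\delta=\E[I_{\delta'}\mid\sigma(I_\delta)]$ for $\delta'<\delta$ (from independence of the increment) shows that $\delta\mapsto\E\n I_\delta\n^p$ is monotone nonincreasing in $\delta$. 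Upgrading the tightness of $\{I_{\delta_k}\}$ into the required uniform $L^p$-bound exploits the specific structure of $L^p_\mu$---type $2$ and cotype $p$ for $p\ge 2$, and type $p$ for $p\in(1,2)$---together with Hoffmann-J\o rgensen or de~Acosta-type moment estimates for centered, bounded-support infinitely divisible distributions, which convert tight centred ID families into uniformly $L^p$-bounded ones. Once $\sup_\delta\E\n I_\delta\n^p<\infty$ is established, \eqref{eq.Dirksen} and the Fatou property yield $\n F_0\n_{\I_p}<\infty$, i.e., the integrability condition (1) or (2).
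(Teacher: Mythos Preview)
Your sufficient direction is essentially correct and close in spirit to the paper's, though you construct $\eta(\lambda)$ directly as the law of $I+J$ whereas the paper verifies the weak compactness criterion of Theorem~\ref{th.Levy-equivalence} by showing $X_k\to X$ in $L^p$. Both work.

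The necessary direction has a real gap. You correctly isolate the obstacle---obtaining $\sup_{\delta}\E\norm{I_\delta}^p<\infty$---but the final paragraph does not actually prove it. You invoke ``Hoffmann-J\o rgensen or de~Acosta-type moment estimates [\ldots] which convert tight centred ID families into uniformly $L^p$-bounded ones,'' but no such off-the-shelf statement is cited, and the appeal to the type/cotype structure of $L^p_\mu$ is a red herring: nothing of that kind is needed. Tightness alone, even for centred ID laws with L\'evy measure supported in $B_U$, does not obviously yield uniform $L^p$-bounds; you would still need to control the $L^p$-norms of individual members of the family, and neither de~Acosta's moment result (which bounds a \emph{single} ID law) nor Hoffmann-J\o rgensen (which needs almost sure convergence as input) does this from tightness.

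The paper closes the gap by a short but specific chain you are missing. Write $I_{\delta_k}=Y_1+\cdots+Y_k$ with $Y_j:=I_{\delta_j}-I_{\delta_{j-1}}$ independent. Theorem~\ref{th.Levy-equivalence} gives not merely tightness but weak convergence $I_{\delta_k}\Rightarrow\eta(\lambda)$; L\'evy's theorem for Banach-space-valued sums of independent random variables then upgrades this to almost sure convergence $I_{\delta_k}\to X$ with $X\sim\eta(\lambda)$. Now de~Acosta's result (applied once, to the limit) gives $\E\norm{X}^p<\infty$ because $\lambda$ is supported in $B_U$, and Hoffmann-J\o rgensen converts almost sure convergence of partial sums with bounded increments and $p$-integrable limit into $L^p$-convergence. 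This yields $\sup_k\E\norm{I_{\delta_k}}^p<\infty$ (indeed convergence in $L^p$), and then \eqref{eq.Dirksen} gives $G_{\delta_k}\to G$ in $\I_p$, hence $G\in\I_p$. No type or cotype enters.
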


\begin{proof}
	For the closed unit ball $B_{L_\mu^p}=\{f\in L^p_\mu:  \norm{f}_{L_\mu^p}\le  1\}$, we define the function
	$$G\colon (0,1]\times L_\mu^p\to L_\mu^p, \qquad G(t,f) =f \1_{B_{L_\mu^p}}(f), $$
	and, letting $D_\delta:=\{f\in L^p_\mu: \delta < \norm{f}_{L_\mu^p}\le  1\}$ for some $\delta\in (0,1)$, we introduce analogously
	$$G_\delta\colon (0,1]\times L_\mu^p\to L_\mu^p, \qquad G_\delta(t,f) =f \1_{D_\delta}(f).$$
	Note, that $G$ and $G_\delta$ do not depend on $t$, but we left the previous notation for consistency. 
	\smallskip
	
	{\rm `If'}: Let $N$ be a Poisson random measure with intensity $\leb\otimes\lambda$. Assume first that the support of $\lambda$ is contained in the closed unit ball $B_{{L_\mu^p}}$.
	The assumed integrability conditions guarantee that $G$ belongs to ${\mathcal I}_p$ and thus  we can define the random variable
	\begin{align*}
		X:=\int_{(0,1]\times B_{{L_\mu^p}}} f \, \cp(\d s,\d f).
	\end{align*}
	Corollary \ref{co.integrable-cp} shows that the probability distribution of $X$ coincide with $\eta(\lambda)$, which shows that $\lambda$ is a L\'evy measure by its very definition.

	For the general case of a $\sigma$-finite measure $\lambda$ with $\la(\{0\})=0$, we apply the decomposition $\lambda=\lambda|_{1}+\lambda|_{1}^c$
	of \eqref{eq:decompose-lambda}. The measure $\lambda|_{1}$ is a L\'evy measure by the first part, and $\lambda|_{1}^c$ is a L\'evy measure since it is finite by assumption. Now \cite[Proposition 5.4.9]{Linde} guarantees that $\lambda$ is a L\'evy measure.
	
	\smallskip
	{\rm `Only if':} Assume that $\lambda$ is a L\'evy measure. Theorem  \ref{th.Levy-equivalence}
	implies that $\lambda|_r^c$ is a finite measure for all $r>0$.
	
	To establish the integrability conditions we can assume that the L\'evy measure $\lambda$ has support in the closed unit ball $B_{{L_\mu^p}}$. Let $N$ be a Poisson random measure with intensity $\leb\otimes\lambda$. Let   $(\delta_k)_{k\in\N}\subseteq (0,1)$ be an arbitrary sequence decreasing to $0$.  Since $\lambda(D_{\delta_k})<\infty$,
	Lemma \ref{le.integrability-L1} guarantees that $G_{\delta_k}$ is integrable with respect to $\cp$ for each $k\in\N$. Thus, we can define the random variables
	\begin{align*}
		X_k:=\int_{(0,1]\times  D_{\delta_k}} f \, \cp(\d s,\d f)\quad\text{for } k\in\N.
	\end{align*}
	Since $X_k$ has the same distribution as $\eta(\lambda|_{\delta_k}^c)$ by Corollary \ref{co.integrable-cp}, 
	Theorem  \ref{th.Levy-equivalence} implies that  $(X_k)_{k\in\N}$ converges weakly to $\eta(\lambda)$ in the space of Borel probability measures on $\Borel(L^p_\mu)$. Letting
	$Y_k:=X_k-X_{k-1}$ for $k\in\N$, with $X_{0}:=0$, it follows  that the random variables $Y_k$ are independent as the sets $ D_{\delta_k}\,\setminus\,  D_{\delta_{k-1}}$ are disjoint for all $k\in \N$. Since $X_k=Y_1+\dots +Y_k$ is a sum of independent random variables converging weakly,  L\'evy's theorem in Banach spaces (see, e.g., \cite[Theorem  V.2.3, page 268]{Vaketal}) implies that the random variables $X_k$ converge almost surely to a random variable $X$, which must have distribution  $\eta(\lambda)$. Since $\E(\norm{X}^p)<\infty$ by \cite[Corollary 3.3]{deAcosta-1980}, it follows from \cite[Corollary 3.3]{Hoffmann-Jorgensen-sums} that $X_k\to X$ in $L^p(\Omega;L_\mu^p)$ as $k\to\infty$. The isometry \eqref{eq.Dirksen} implies that $G_{\delta_k}\to G$ in $\I_p$ as $k\to\infty$.
\end{proof}

\begin{example}
	If $p=2$, Fubini's theorem implies
	\begin{align*}
		\int_{S} \int_{B_{{L_\mu^2}}} \abs{f(s)}^2\, \lambda(\d f)\,\mu(\d s)
		& =  \int_{B_{{L_\mu^2}}} \int_S \abs{f(s)}^2\, \mu(\d s) \,  \lambda(\d f)
		=  \int_{B_{{L_\mu^2}}}  \norm{f}^2_{L^2_\mu} \,  \lambda(\d f).
	\end{align*}
	Consequently, the two integrals in part (a) of Theorem \ref{th.Bochner-Levy-measure} coincide. Taking into account the condition that $\lambda|_r^c$ is finite for all $r>0$, it follows that a $\sigma$-finite measure $\lambda$ on $L^2_\mu$ with $\lambda(\{0\})=0$ is a L\'evy measure if and only if
	$$\int_{L^2_\mu} \left(\norm{f}^2_{L_\mu^2}\wedge 1\right) \,\lambda(\d f)<\infty. $$
	This corresponds to the well known characterisation of L\'evy measures on Hilbert spaces in Theorem \ref{thm:LevyHilbert}.
\end{example}

\begin{example} \label{ex.sequence-space-larger-2}
	In this Example, we consider the sequence space $\ell^p = \ell^p(\N)$ with $p\in [2,\infty)$. The canonical sequence of unit vectors in $\ell^p$ is denoted by $(e_k)_{k\in\N}$.
	Let  $\lambda$ be a $\sigma$-finite measure on
	$\Borel(\ell^p)$ with $\lambda(\{0\})=0$ and $\lambda|_r^c$ finite for all $r>0$.
	Then the condition in part (a) of Theorem \ref{th.Bochner-Levy-measure} is satisfied if
	\begin{align*}
		\sum_{k=1}^\infty \left(\int_{B_{{\ell^p}}}\scapro{f}{e_k}^2\, \lambda(\d f)\right)^{p/2}<\infty
		\qquad\text{and}\qquad
		\int_{B_{{\ell^p}}} \norm{f}^p_{\ell^p} \, \lambda(\d f)<\infty.
	\end{align*}
	Again taking into account the assumption that $\lambda|_r^c$ is finite for all $r>0$, we can conclude that  $\lambda$ is a L\'evy measure if and only if
	\begin{align*}
		\sum_{k=1}^\infty \left(\int_{B_{{\ell^p}}}\scapro{f}{e_k}^2\, \lambda(\d f)\right)^{p/2}<\infty
		\qquad\text{and}\qquad
		\int_{\ell^p} \left(\norm{f}^p_{\ell^p}\wedge 1\right) \, \lambda(\d f)<\infty.
	\end{align*}
	This characterisation  coincides with the result derived in \cite[Theorem 3]{Yurinskii}.
\end{example}

\begin{example}  \label{ex.sequence-space-smaller-2}
	In this example, we consider the sequence space $\ell^p = \ell^p(\N)$ for $p\in (1,2)$.
	Let  $\lambda$ be $\sigma$-finite measure on
	$\Borel(\ell^p)$ with $\lambda(\{0\})=0$ and $\lambda|_r^c$ finite for all $r>0$.
	Theorem \ref{th.Bochner-Levy-measure} shows that $\lambda$ is a L\'evy measure if and only if
	\begin{align*}
		\inf\left\{\sum_{k=1}^\infty  \scapro{\int_{B_{{\ell^p}} }\abs{F_1(f)}^2\, \lambda(\d f)}{e_k}^{p/2}
		+ \int_{B_{{\ell^p}}} \norm{F_2(f)}_{\ell^p}^p \, \lambda(\d f)\right\}<\infty,
	\end{align*}
	where the infimum is taken over all functions $F_1\in \S_\lambda^p$ and $F_2\in \D_\lambda^p$ with $F_1(f)+F_2(f)=f$ for all $f\in B_{{\ell^p}}=\{g\in \ell^p:\norm{g}_{\ell^p}\le 1\}$.
	
	If  we take $F_1=\Id_{\ell^p} \1_{B_{{\ell^p}}}$ and $F_2=0$ or $F_1=0$ and $F_2=\Id_{\ell^p} \1_{B_{{\ell^p}}}$   then we obtain the sufficient conditions
	\begin{align*}
		\sum_{k=1}^\infty \left(\int_{B_{{\ell^p}}}\scapro{f}{e_k}^2\, \lambda(\d f)\right)^{p/2}<\infty
		\qquad\text{or}\qquad
		\int_{B_{{\ell^p}}}\norm{f}^p_{\ell^p}\, \lambda(\d f)<\infty.
	\end{align*}
	The second condition is known as a sufficient condition due to the fact that the space $\ell^p$ is of type $p$ for $p\in [1,2]$; see \cite{AraujoGine2}.
\end{example}

With the same methods as in Theorem \ref{th.Bochner-Levy-measure}, but using the $L^p$-estimates in martingale type and cotype spaces from \cite{Dirksen-Maas-Neerven}, one can show that if $U$ is a separable Banach space with martingale type $p\in (1,2]$ and $\lambda$ is a $\sigma$-finite measure on $\Borel(U)$ with $\la(\{0\}) =0$, then
$$ \int_U (\n u\n^p \wedge 1)\,\lambda(\d u) < \infty$$
implies that $\lambda$ is a L\'evy measure. In the converse direction, if $U$ has martingale cotype $q\in [2,\infty)$, and if $\lambda$ is a L\'evy measure on $\Borel(U)$, one can similarly show that
$$ \int_U (\n u\n^q \wedge 1)\,\lambda(\d u) < \infty.$$
We leave the details to the reader, since these results are already covered, with a different method of proof, in \cite{AraujoGine2}.

\section{L\'evy measures on UMD Banach spaces}

The aim of this section is to extend the results of the preceding section to UMD-spaces.
This class of Banach spaces plays a prominent role in stochastic analysis, where it provides the correct setting for Banach space-valued martingale theory (see \cite{Hytonen-etal} and the references therein) and the theory of stochastic integration (see \cite{Hytonen-etal, Neerven-etal=integration, NVW12, Hytonen-etal4} and the references therein), and in harmonic analysis (see \cite{Hytonen-etal3} and the references therein), in that several of the main theorems in these areas admit extensions to the functions with values in a Banach space $X$ if and only of $X$ is a UMD-space. For example, the Hilbert transform on $L^p(\R)$ extends boundedly to $L^p(\R;X)$ if and only if $X$ is a Banach space, and a similar characterisation holds for the It\^o isometry.

A Banach space $V$ is said to be a {\em UMD-space} when, for some (or equivalently, any) given $p\in (1,\infty)$, there exists a constant $\beta_{p,V}\ge 1$ such that for
every $V$-valued martingale difference sequence $(d_j)_{j=1}^n$ and every $\{-1, 1\}$-valued sequence $(\epsilon_j)_{j=1}^n$ we have
$$
\left(\E\left\|\sum_{j=1}^n \epsilon_j d_j \right\|^p\right)^{1/p}
\leq \beta_{p,V} \,
\left(\E\left\|\sum_{j=1}^n d_j\right\|^p\right)^{1/p}.
$$
Examples of UMD-spaces include Hilbert spaces and the spaces $L^p(S,\mu)$ with $1<p<\infty$, for arbitrary measure spaces $(S,\mu)$. Additionally, when $V$ is a UMD-space, then for any $1<p<\infty$, the Bochner spaces $L^p(S,\mu;V)$ are UMD-spaces. A comprehensive treatment of UMD-spaces is offered in \cite{Hytonen-etal} and the references therein.

Let $V$ be a UMD-space and $M\colon \Rp\times\Omega\to V$ a purely discontinuous martingale.
For a fixed time $t>0$, define path-wise a random operator $J_{\Delta M}: \ell^2((0,t]) \to V$ by
\begin{equation*}
	J_{\Delta M}h :=\sum_{s\in (0,t]} h_s \Delta M(s), \quad h = (h_s)_{s\in (0,t]}\in \ell^2((0,t]),
\end{equation*}
where $\Delta M$ is the jump process associated with $M$,
and $\ell^2((0,t])$ is the Hilbert space of all mappings $f:(0,t]\to \R$ satisfying
$$ \n f\n_{\ell^2((0,t])}^2 := \sum_{s\in (0,t]} |f(s)|^2 <\infty,$$
the sum on the right-hand side being understood as the supremum of all sums $\sum_{s\in F} |f(s)|^2 $ with $F\subseteq (0,t]$ finite.

Now let $p\in [1,\infty)$ be given and $M$ be a $V$-valued purely discontinuous martingale. It is shown in \cite[Theorem  6.5]{Yaroslavtsev-20}
that
$M$ is an $L^p$-martingale if and only if
for each $t\ge 0$ we have
$J_{\Delta M}\in \gamma(\ell^2((0,t]),V)$ almost surely and
$$ \E \left[ \norm{J_{\Delta M}}_{\gamma(\ell^2((0,t]),V)}^p\right] <\infty,$$
where $\gamma(\ell^2((0,t]),V)$ is the Banach space of $\gamma$-radonifying operators from  $\ell^2((0,t])$ to $V$, and that, moreover, in this situation one has the equivalence of norms
\begin{equation}\label{eq.UMD-iso}
	\EE\left[\sup_{0< s\le t} \norm{M(s)}^p\right]
	\simeq_{p,V} \EE\left[ \norm{J_{\Delta M}}_{\gamma(\ell^2((0,t]),V)}^p\right].
\end{equation}

In the remainder of this section, we let $U$ be a separable Banach space, and consider
a Poisson random measure $N$ with intensity measure $\leb\otimes\lambda$
for a $\sigma$-finite measure $\lambda$ on $\Borel(U)$ with $\lambda(\{0\})=0$. The compensated Poisson random measure is denoted by $\cp$. 
Our aim is to apply \eqref{eq.UMD-iso} to obtain an $L^p$-bound (see \cite[Section 7.2]{Yaroslavtsev-20}) for martingales $M$ of the form
\begin{equation}\label{def.M}
	M_B(s):=\int_{(0,s]\times B} F(r,u)\, \cp(\d r,\d u), \quad s\in (0,t],
\end{equation}
for simple functions $F\colon (0,t] \times U\to V$ and some $t>0$ and $B\in\Borel(U)$ fixed. This $L^p$-bound will allow us to extend the class of functions integrable with respect to $\widetilde N$ to a more general class of integrands.

For a measurable function $g: (0,t] \times U\to \R$ we write
$ g\in L_N^2((0,t]\times U)$ if for all $\omega\in\Omega$ we have
$$ \n g\n_{L_N^2((0,t]\times U)}(\omega):= \int_{(0,t]\times U} |g(r,u)|^2\, N(\om,\d r, \d u) < \infty.$$
In this way we may interpret the expression $\n g\n_{L_N^2((0,t] \times U)}$ as a nonnegative random variable on $\Omega$.

Now, for a strongly measurable function $F\colon (0,t] \times  U\to V$ introduce the restriction $F_B\colon (0,t] \times  U\to V$ defined by $F_B(r,u):=\1_B(u)F(r,u)$ for the set $B$ defining the martingale $M_B$ in \eqref{def.M}. 
If $F$ satisfies
\begin{align}\label{eq:weakL2}\int_{(0,t]\times U} |\scapro{F_B(r,u)}{v^\ast}|^2\, N(\om,\d r, \d u) <\infty \quad\forall \omega\in \Omega, \ v^\ast\in V^\ast,
\end{align}
we may now define, for every $\omega\in\Omega$, a bounded operator $T_{F_B}(\omega): L_{N(\omega)}^2((0,t]\times U)\to V$ by the Pettis integral (which is well defined by \cite[Theorem 1.2.37]{Hytonen-etal})
\begin{align}\label{eq:Tf}
	T_{F_B}(\omega) g := {\rm (P)\,-}\int_{(0,t]\times U} g(r,u) F_B(r,u)\, N(\om,\d r, \d u).
\end{align}
The Pettis measurability theorem implies that the $V$-valued random variable $\omega\mapsto T_{F_B}(\omega) g$ is strongly measurable. 

Pointwise on $\Omega$, the following identities holds for all $v^\ast\in V^\ast$:
\begin{align*}
	\n J_{\Delta M_B}^\ast v^\ast \n_{\ell^2((0,t])}^2
	& =\sum_{s\in (0,t]} \abs{ \scapro{\Delta M_B(s)}{v^\ast}}^2
	\\ & = \int_{(0,t]\times U} |\scapro{F_B(r,u)}{v^\ast}|^2\, N(\d r, \d u)
	\\ & = \n T_{F_B}^\ast v^\ast\n_{L_N^2((0,t]\times U)}^2,
\end{align*}
the middle identity being a consequence of \cite[Corollary 4.4.9]{Applebaum}.

Hence, as consequence of the comparison theorem for $\gamma$-radonifying operators (see \cite[Theorem 9.4.1]{Hytonen-etal2}), applied pointwise on $\Omega$, we obtain that  $T_{F_B}\in \gamma(L^2_N((0,t]\times U), V)$ almost surely if and only if $J_{\Delta M_B}\in \gamma(\ell^2((0,t]),V)$ almost surely, in which case we have almost surely the  identity of norms
\begin{align}\label{Yaro}  \norm{J_{\Delta M_B}}_{\gamma(\ell^2((0,t]),V)} =
	\norm{T_{F_B}}_{\gamma(L^2_N((0,t]\times U), V)}.
\end{align}
These considerations are key to proving the following theorem.

\begin{theorem}\label{thm.UMD-iso-int}
	Let $V$ be a UMD-space and let $p\in [1,\infty)$. For fixed $t>0$, let $F\colon (0,t]\times  U\to V$ be a strongly measurable function satisfying the weak $L^2$-integrability condition \eqref{eq:weakL2} for all $B\in\Borel(U)$.  Then the following assertions are equivalent:
	\begin{enumerate}[\rm(1)]
		\item\label{it.UMD-iso-int1} $F$ is $L^p$-integrable with respect to $\widetilde N$ and satisfies, for all  $B\in \Borel(U)$,
		\begin{align*}
			\EE\left[ \sup_{0< s\le t} \norm{\int_{(0,s]\times B} F(r,u)\, \cp(\d r,\d u)}^p\right] < \infty;
		\end{align*}
		\item\label{it.UMD-iso-int2} $T_{F_B}$ is in $\gamma(L^2_N((0,t]\times U), V)$ almost surely for all $B\in \Borel(U)$  and 
		\[ \E\norm{T_{F_B}}_{\gamma(L^2_N((0,t]\times U), V)}^p <\infty.\]
	\end{enumerate}
	In this situation, for all  $B\in \Borel(U)$, one has
	\begin{align*}
		\EE\left[ \sup_{0< s\le t} \norm{\int_{(0,s]\times B} F(r,u)\, \cp(\d r,\d u)}^p\right]
		\simeq_{p,V} \E\norm{T_{F_B}}_{\gamma(L^2_N((0,t]\times U), V)}^p,
	\end{align*}
	with constants depending only on $p$ and $V$.
\end{theorem}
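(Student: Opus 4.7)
The plan is to prove the equivalence first for simple functions $F$, where the statement is essentially a direct consequence of \eqref{eq.UMD-iso} combined with \eqref{Yaro}, and then to extend it to general strongly measurable $F$ by a two-sided approximation argument. The key point that makes approximation possible is the linearity of $F\mapsto T_{F_B}$ and of $F\mapsto I_B(F)$, together with the fact that the Banach space of $\gamma$-radonifying operators is complete pathwise on $\Omega$, so Cauchy statements in $L^p(\Omega)$-norm of the $\gamma$-norm lift to genuine limits.

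For a simple $F$ of the form \eqref{eq.simple-function}, the process $s\mapsto M_B(s)$ in \eqref{def.M} is a purely discontinuous $V$-valued $L^p$-martingale, so by \eqref{eq.UMD-iso} and the pointwise identity \eqref{Yaro} one has
\[
\E\Big[\sup_{0<s\le t}\|M_B(s)\|^p\Big]\simeq_{p,V}\E\|J_{\Delta M_B}\|_{\gamma(\ell^2((0,t]),V)}^p=\E\|T_{F_B}\|_{\gamma(L^2_N((0,t]\times U),V)}^p,
\]
which is the desired equivalence in the simple case together with the two-sided bound.

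For the direction \eqref{it.UMD-iso-int1}$\Rightarrow$\eqref{it.UMD-iso-int2}, pick simple functions $F_n$ furnished by the definition of $L^p$-integrability, so that $F_n\to F$ pointwise $(\leb\otimes\lambda)$-a.e.\ and $I_B(F_n)\to I_B(F)$ in $L^p(\Omega;V)$ for every $B$. Since $V$ is UMD hence reflexive, Doob's maximal inequality upgrades the convergence to the supremum in $s$. Apply Step~1 to the simple difference $F_m-F_n$ (using that $T_{(F_m-F_n)_B}=T_{F_{m,B}}-T_{F_{n,B}}$) to conclude that $\|T_{F_{m,B}}-T_{F_{n,B}}\|_{\gamma(L^2_N,V)}\to 0$ in $L^p(\Omega)$. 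Passing to a subsequence $(n_k)$ along which this difference is summable a.s., the sequence $T_{F_{n_k,B}}(\omega)$ is Cauchy in $\gamma(L^2_{N(\omega)},V)$ for $\PP$-a.e.\ $\omega$, and its limit $S(\omega)$ is identified with $T_{F_B}(\omega)$ by testing against $g\in L^2_{N(\omega)}$: the $(\leb\otimes\lambda)$-a.e.\ convergence of $F_n$ translates, since $N$ has intensity $\leb\otimes\lambda$, into $N$-a.e.\ convergence almost surely, so $T_{F_{n,B}}(\omega)g\to T_{F_B}(\omega)g$ pointwise on jumps, and Fatou applied to the finite-rank approximations in $\gamma$ identifies the limits. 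This also yields the claimed two-sided bound.

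For the reverse direction \eqref{it.UMD-iso-int2}$\Rightarrow$\eqref{it.UMD-iso-int1}, we construct simple $F_n\to F$ pointwise $(\leb\otimes\lambda)$-a.e.\ with $T_{F_{n,B}}\to T_{F_B}$ in $L^p(\Omega;\gamma)$-norm. A standard construction works: first exhaust $U$ by an increasing sequence of sets $E_n$ on which $F$ is bounded and $\lambda(E_n)<\infty$, then approximate $F\1_{E_n}$ in the strong operator sense by finite-rank simple functions. The ideal property of $\gamma$ and the hypothesis $T_{F_B}\in\gamma$ a.s.\ furnish a pathwise dominating element that justifies $\gamma$-dominated convergence $T_{F_{n,B}}\to T_{F_B}$ pathwise, and the $L^p$ hypothesis on $\|T_{F_B}\|_\gamma$ promotes this to $L^p(\Omega)$-convergence of the $\gamma$-norms. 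Step~1 applied to the simple differences then shows that $(I_B(F_n))_n$ is Cauchy in $L^p(\Omega;V)$, so $F$ is $L^p$-integrable with respect to $\widetilde N$, and the two-sided estimate is inherited in the limit.

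The main obstacle is the last step: producing simple approximations whose operators $T_{F_{n,B}}$ converge to $T_{F_B}$ in the \emph{random} $\gamma$-norm, where the underlying Hilbert space $L^2_{N(\omega)}$ itself varies with $\omega$. This is handled by truncating both the values and the support of $F$ and invoking the ideal property with the pathwise available dominant $T_{F_B}(\omega)\in\gamma(L^2_{N(\omega)},V)$ supplied by hypothesis \eqref{it.UMD-iso-int2}; the identification of the limit operator as $T_{F_B}$ again proceeds by testing against elements of $L^2_{N(\omega)}$ and using the $(\leb\otimes\lambda)$-a.e.\ convergence of $F_n$.
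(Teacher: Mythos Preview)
Your overall architecture --- prove the equivalence for simple $F$ via \eqref{eq.UMD-iso} and \eqref{Yaro}, then extend by approximation --- matches the paper, and your treatment of \ref{it.UMD-iso-int1}$\Rightarrow$\ref{it.UMD-iso-int2} is essentially the paper's argument (Cauchyness from the simple case, pass to an a.s.\ convergent subsequence, identify the limit by testing against $v^\ast$). The identification step could be sharpened: rather than arguing via ``pointwise on jumps'', the paper observes directly that $\gamma$-norm convergence gives $(T_{F_{n,B}})^\ast v^\ast\to T_B^\ast v^\ast$ weakly in $L^2_{N(\omega)}$, while $\langle F_{n,B},v^\ast\rangle\to\langle F_B,v^\ast\rangle$ pointwise, forcing $T_B^\ast v^\ast=\langle F_B,v^\ast\rangle$.

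The direction \ref{it.UMD-iso-int2}$\Rightarrow$\ref{it.UMD-iso-int1} is where your proposal is genuinely incomplete. You need \emph{simple} deterministic $F_n$ with $T_{F_{n,B}}(\omega)\to T_{F_B}(\omega)$ in $\gamma(L^2_{N(\omega)},V)$ for almost every $\omega$, and your ``truncate, then approximate by simple functions'' does not supply the $\gamma$-domination you invoke. For pure truncations $F\1_{E_k}$ one indeed has $|T_{F\1_{E_k}}^\ast v^\ast|\le|T_{F_B}^\ast v^\ast|$ pointwise, but these are not simple; once you replace them by honest simple functions, this pointwise domination is lost, and no factorisation through a contraction on $L^2_{N(\omega)}$ is evident. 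The paper circumvents this by a specific choice: take finite filtrations $(\calF_n),(\calG_n)$ generating $\Borel(\R_+)$ and $\Borel(U)$, and set $F_{n,B}:=\E_n F_B$ with $\E_n$ the conditional expectation onto $\calF_n\times\calG_n$ in $L^2_{N(\omega)}$. Self-adjointness of $\E_n$ yields the exact operator identity
\[
T_{F_{n,B}}(\omega)=T_{F_B}(\omega)\circ\E_n,
\]
so the right ideal property and $\|\E_n\|\le 1$ give $\|T_{F_{n,B}}\|_\gamma\le\|T_{F_B}\|_\gamma$, and strong convergence $\E_n\to I$ together with \cite[Theorem~9.1.14]{Hytonen-etal2} gives $T_{F_{n,B}}\to T_{F_B}$ in $\gamma$ pathwise. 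This is precisely the mechanism your last paragraph gestures at but does not produce; the conditional expectation is what converts the hypothesis $T_{F_B}\in\gamma$ into a usable dominant for \emph{simple} approximants.
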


\begin{proof}
	\ref{it.UMD-iso-int2}$\implies$\ref{it.UMD-iso-int1}:
	If $F$ is {\em simple} and satisfies the conditions of \ref{it.UMD-iso-int2}, this implication
	follows by combining \eqref{eq.UMD-iso} and \eqref{Yaro}, the point here being that the $L^p$-integrability of $F$ with respect to $\widetilde N$ holds by definition.
	
	Suppose now that $F$ is strongly measurable and satisfies the conditions of \ref{it.UMD-iso-int2}.
	The idea of the proof is to approximate $F$ with simple functions satisfying the conditions of the theorem. To this end, fix $B\in \Borel(U)$ and let $(\calF_n)_{n\in\N}$ and $(\calG_n)_{n\in\N}$ be filtrations generating the Borel $\sigma$-algebras of $\R_+$ and $U$, such that each $\calF_n$ and $\calG_n$ consists of finitely many Borel sets. For each $\omega\in \Omega$, let 
	\[\E_n \colon L_{N(\omega)}^2((0,t]\times U;V)\to L_{N(\omega)}^2((0,t]\times U;V)\]
	denote the (vector-valued) conditional expectation with respect to the product $\sigma$-algebra $\calF_n\times \calG_n$ (see \cite[Chapter 2]{Hytonen-etal}). The functions $$F_{n,B} := \E_n F_B$$ are simple, and  each of them satisfies the conditions of the theorem.
	
	For each $\omega\in\Omega$ and $v^\ast\in V^\ast$, the $L^2$-contractivity of conditional expectations gives
	\begin{align*}\int_{(0,t]\times U} |\scapro{(\E_n F_B)(r,u)}{v^\ast}|^2\, N(\om,\d r, \d u)
		& = \int_{(0,t]\times U} |\E_n\scapro{F_B}{v^\ast}(r,u)|^2\, N(\om,\d r, \d u)
		\\ & = \n \E_n\scapro{F_B}{v^\ast}\n_{L_{N(\omega)}^2((0,t]\times U)}^2
		\\ & \le \n \scapro{F_B}{v^\ast}\n_{L_{N(\omega)}^2((0,t]\times U))}^2 <\infty.
	\end{align*}
	The self-adjointness of $\E_n$, see \cite[Proposition 2.6.32]{Hytonen-etal},
	implies  for each $\omega\in\Omega$ and $g\in L_{N(\omega)}^2((0,t]\times U)$ that
	\begin{align*} T_{F_{n,B}}(\omega)g
		& =  \int_{(0,t]\times B} g(r,u) (\E_n F_B)(r,u)\, N(\om,\d r, \d u)
		\\ & = \int_{(0,t]\times B} (\E_n g)(r,u) F_B(r,u)\, N(\om,\d r, \d u)
		\\ & = (T_{F_B}(\omega) \circ \E_n)g.
	\end{align*}
	Therefore, we conclude  $ T_{F_{n,B}}(\omega) = T_{F_B}(\omega) \circ \E_n \in \gamma(L_{N(\omega)}^2((0,t]\times U),V)$
	by the ideal property \eqref{eq:ideal-property} and, using again that $\E_n$ is contractive,
	\begin{align*}
		\n T_{F_{n,B}}(\omega) \n_{ \gamma(L_{N(\omega)}^2((0,t]\times U),V)}
		& = \n T_{F_B}(\omega) \circ \E_n \n_{ \gamma(L_{N(\omega)}^2((0,t]\times U),V)}
		\\ & \le  \n T_{F_B}(\omega) \n_{ \gamma(L_{N(\omega)}^2((0,t]\times U),V)}.
	\end{align*}
	Next, since $\E_n\to I$ strongly, it follows from \cite[Theorem 9.1.14]{Hytonen-etal2} that
	$$ \lim_{n\to\infty} \n T_{F_B-F_{n,B}}(\omega)\n_{\gamma(L_{N(\omega)}^2((0,t]\times U),V)} = \n T_{F_B}(\omega) - T_{F_{n,B}}(\omega)\n_{\gamma(L_{N(\omega)}^2((0,t]\times U),V)} = 0.$$
	Finally, by monotone convergence,
	$$\lim_{n\to\infty} \E \n T_{F_{n,B}}\n_{\gamma(L_{N}^2((0,t]\times U),V)}^p
	= \n T_{F_B}\n_{\gamma(L_{N}^2((0,t]\times U),V)}^p.
	$$
	Since the theorem holds for each of the $F_{n,B}$,  using routine arguments the theorem now follows by letting $n\to\infty$.
	
	\smallskip
	
	\ref{it.UMD-iso-int1}$\implies$\ref{it.UMD-iso-int2}:
	Suppose that $F$ is strongly measurable and satisfies the conditions of \ref{it.UMD-iso-int1}.
	Choose a sequence of simple functions $F_n\colon (0,t]\times U\to V$ such that
	$F_n\to F$  pointwise $(\leb\otimes\lambda)$-almost everywhere and, for any $B\in\Borel(U)$, one has
	$$
	\int_{(0,t]\times B} F_n(r,u)\, \cp(\d r,\d u) \to \int_{(0,t]\times B} F(r,u)\, \cp(\d r,\d u)
	$$
	in $L^p(\Om;V)$ as $n\to \infty$. By Doob's inequality, one then also has
	$$ \lim_{n,m\to\infty}\EE\left[ \sup_{0< s\le t} \norm{\int_{(0,s]\times B} (F_n(r,u) - F_m(r,u)) \, \cp(\d r,\d u)}^p\right] = 0.$$
	Letting $F_{n,B}:=\1_BF_n$ and $F_B:=\1_BF$, it follows from \eqref{eq.UMD-iso} and \eqref{Yaro} that
	$$\lim_{n,m\to\infty} \E\norm{T_{F_{n,B}} - T_{F_{m,B}}}_{\gamma(L^2_N((0,t]\times U), V)}^p = 0.$$
	Passing to a subsequence, 
	we may assume that, for almost all $\om\in\Om$,
	$$\lim_{n\to\infty} \n T_{F_{n,B}}(\om) -T_{F_{m,B}}(\om) \n_{\gamma(L^2_{N(\om)}((0,t]\times U), V)} = 0.$$
	By completeness of $\gamma(L^2_{N(\om)}((0,t]\times U), V)$ it follows that, for almost all $\om\in\Om$,
	the limit
	$$ T_B(\om):= \lim_{n\to\infty} T_{F_{n,B}}(\om)$$ exists in $\gamma(L^2_{N(\om)}((0,t]\times U), V)$.
	Then also, for any $v^\ast\in V^\ast$,
	$$ (T_{F_{n,B}}(\om))^\ast v^\ast \to (T_B(\om))^\ast v^\ast \ \ \hbox{in $L^2_{N(\om)}((0,t]\times U)$}.$$
	Hence, for all $g\in L^2_{N(\om)}((0,t]\times U)$ and $v^\ast\in V^\ast$, it follows that
	$$ \scapro{T_B(\om)g}{v^\ast} =\lim_{n\to\infty} \scapro{T_{F_{n,B}}(\om)g}{v^\ast}
	= \lim_{n\to\infty} \int_{(0,t]\times U} g\scapro{F_{n,B}}{v^\ast} \d N(\om).
	$$
	This shows that
	$\scapro{F_{n,B}}{v^\ast}\to (T_B(\om))^\ast v^\ast$ weakly in $L^2_{N(\om)}((0,t]\times U)$.
	Since $\scapro{F_{n,B}}{v^\ast} \to \scapro{F_B}{v^\ast}$ pointwise, 
	a standard argument establishes that
	$\scapro{F_B}{v^\ast} = (T_B(\om))^\ast v^\ast$ $(\leb\otimes\lambda)$-almost everywhere, and hence as elements of  $L^2_{N(\om)}((0,t]\times U)$. But (by pairing with functions $g\in L^2_{N(\om)}((0,t]\times U)$)
	this is the same as saying that $T_B = T_{F_B}$.
	
	Putting things together, we have shown that, for almost all $\om\in\Om$,
	$$ \lim_{n\to\infty} T_{F_{n,B}}(\om) = T_{F_B}(\om)$$
	with convergence in $\gamma(L^2_{N(\om)}((0,t]\times U), V)$.
	The finiteness of  $\E\norm{T_{F_B}}_{\gamma(L^2_N((0,t]\times U), V)}^p$
	now follows from Fatou's lemma. This completes the proof of the implication
	\ref{it.UMD-iso-int1}$\implies$\ref{it.UMD-iso-int2}.
	
	The assertion about equivalence of norms follows by passing to the limit $n\to \infty$ in the preceding argument.
\end{proof}

We will apply this theorem to obtain a necessary and sufficient condition for a $\sigma$-finite measure on a separable UMD space to be a L\'evy measure. We start with a lemma that does not require the UMD property.

\begin{lemma}\label{lem:4.2} Let $U$ be a separable Banach space. Suppose that $\lambda$ is a $\sigma$-finite measure  on $\Borel(U)$ with $\lambda(\{0\})=0$. If the image measure $\scapro{\lambda}{u^\ast}$ is a L\'evy measure on $\R$ for all $u^\ast\in U^\ast$, then
	$G$ satisfies
	the weak $L^2$-integrability condition \eqref{eq:weakL2} for all $B\in \Borel(U)$, or equivalently, for all $u^\ast\in U^\ast$ we have
	\begin{align*}\int_{(0,1]\times B_U} |\scapro{u}{u^\ast}|^2\, N(\d s,\d u) <\infty \quad \hbox{almost surely}.
	\end{align*}
\end{lemma}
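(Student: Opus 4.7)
The plan is to reduce the weak $L^2$ condition to a one-dimensional statement via duality, invoke the well-known characterisation \eqref{eq.int-R} of L\'evy measures on $\R$ for each image measure $\scapro{\lambda}{u^\ast}$, and then conclude by applying Theorem~\ref{th.PRM}(1) pointwise in $\omega$.

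The stated equivalence is immediate: for any $B\in\Borel(U)$ and $u^\ast\in U^\ast$,
\begin{align*}
\scapro{G_B(r,u)}{u^\ast}=\scapro{u}{u^\ast}\,\1_{B\cap B_U}(u),
\end{align*}
so that for every $\omega\in\Omega$,
\begin{align*}
\int_{(0,1]\times U} \abs{\scapro{G_B(r,u)}{u^\ast}}^2 \, N(\om,\d r,\d u)
\le \int_{(0,1]\times B_U} \abs{\scapro{u}{u^\ast}}^2 \, N(\om,\d r,\d u).
\end{align*}
It therefore suffices to establish the almost sure finiteness of the right-hand side for each fixed $u^\ast\in U^\ast$.

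Now fix $u^\ast\in U^\ast$. By assumption, $\scapro{\lambda}{u^\ast}$ is a L\'evy measure on $\R$, hence by \eqref{eq.int-R} it satisfies $\int_\R(\abs{r}^2\wedge 1)\,\scapro{\lambda}{u^\ast}(\d r)<\infty$, which via the change of variables formula is equivalent to
\begin{align*}
\int_U (\abs{\scapro{u}{u^\ast}}^2\wedge 1)\,\lambda(\d u) < \infty.
\end{align*}
Setting $F(r,u):=\abs{\scapro{u}{u^\ast}}^2\,\1_{B_U}(u)$ on $(0,1]\times U$, the pointwise bound $\abs{F(r,u)}\wedge 1\le \abs{\scapro{u}{u^\ast}}^2\wedge 1$ (valid since $\1_{B_U}\le 1$) yields
\begin{align*}
\int_{(0,1]\times U}\bigl(\abs{F(r,u)}\wedge 1\bigr)\,\d r\,\lambda(\d u) \le \int_U (\abs{\scapro{u}{u^\ast}}^2\wedge 1)\,\lambda(\d u) < \infty.
\end{align*}
Theorem~\ref{th.PRM}(1) then guarantees that $F$ is integrable with respect to $N$, which is precisely the statement that the pathwise Poisson integral $\int_{(0,1]\times B_U}\abs{\scapro{u}{u^\ast}}^2\,N(\d r,\d u)$ is almost surely finite.

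I do not anticipate any real obstacle: the proof is essentially a translation between the Banach-space-valued weak $L^2$ condition and the one-dimensional L\'evy integrability condition, combined with a direct appeal to Theorem~\ref{th.PRM}(1). The only technical care needed is to record the pointwise domination on $(0,1]\times B_U$, which is trivial.
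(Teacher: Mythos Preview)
Your proof is correct and follows essentially the same route as the paper: reduce to a one-dimensional integrability statement using the L\'evy property of $\scapro{\lambda}{u^\ast}$ and then invoke Theorem~\ref{th.PRM}(1). The only cosmetic difference is that the paper normalises to $\norm{u^\ast}\le 1$ so that $\abs{\scapro{u}{u^\ast}}\le 1$ on $B_U$ and hence $\int_{B_U}\abs{\scapro{u}{u^\ast}}^2\,\lambda(\d u)\le \int_{[-1,1]} r^2\,\scapro{\lambda}{u^\ast}(\d r)<\infty$ directly, whereas you avoid this normalisation by using the bound $\abs{F}\wedge 1\le \abs{\scapro{u}{u^\ast}}^2\wedge 1$; both lead to the hypothesis of Theorem~\ref{th.PRM}(1).
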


\begin{proof} Assuming without loss of generality that $\n u^\ast\n\le 1$, this follows from Theorem \ref{th.PRM},
	because
	$$ \int_{(0,1]\times B_U} |\scapro{u}{u^\ast}|^2\,\d s\,\lambda(\d u) = \int_{B_U} |\scapro{u}{u^\ast}|^2\, \lambda(\d u)
	\le \int _{[-1,1]} r^2 \,\scapro{\lambda}{u^\ast}(\d r),
	$$
	and the last expression is finite (take $H = \R$ in Theorem \ref{thm:LevyHilbert})
	since by assumption $\scapro{\lambda}{u^\ast}$ is a L\'evy measure on $\R$.
\end{proof}

It will be useful to introduce the function $G: (0,1] \times  U\to U$ defined by $$G(t,u):= u \1_{B_U}(u),$$
where $B_{U}=\{u\in U:\norm{u}\le 1\}$ as before. Note that $G$ does not depend on $t$, but we keep previously introduced notation for consistency.
We now obtain the following characterisation of L\'evy measures in the setting of UMD-spaces.

\begin{theorem}\label{th.UMD-Levy-measure}
	Let $U$ be a separable UMD-space and $\lambda$ a $\sigma$-finite measure  on $\Borel(U)$ with $\lambda(\{0\})=0$.
	Then $\lambda$ is a L\'evy measure if and only if the following  conditions are satisfied:
	\begin{enumerate}[\rm(i)]
		\item $\lambda|_r^c$ is a finite measure for all $r>0$;
		\item $\scapro{\lambda}{u^\ast}$ is a L\'evy measure on $\R$ for all $u^\ast\in U^\ast$;
		\item\label{eq.int-UMD} for some (equivalently, for all) $p\in [1,\infty)$ we have
		\begin{equation*}
			\E\norm{T_G}_{\gamma(L^2_N((0,1]\times U), U)}^p<\infty,
		\end{equation*}
		where $N$ denotes a Poisson random measure with intensity measure $\leb\otimes\lambda$
		and the operator $T_G$ is defined as in \eqref{eq:Tf}.
	\end{enumerate}
\end{theorem}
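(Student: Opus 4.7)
The plan is to adapt the blueprint of Theorem \ref{th.Bochner-Levy-measure}, replacing the Dirksen isomorphism \eqref{eq.Dirksen} with Theorem \ref{thm.UMD-iso-int}. As in Theorem \ref{th.Bochner-Levy-measure}, the decomposition $\lambda=\lambda|_1+\lambda|_1^c$ together with \cite[Proposition 5.4.9]{Linde} reduces both directions to the case where $\lambda$ is supported on the closed unit ball $B_U$, in which case the natural approximating integrands are $G_{\delta_k}:=G\1_{D_{\delta_k}}$, where $D_{\delta_k}:=\{u: \delta_k<\n u\n\le 1\}$ for a sequence $\delta_k\downarrow 0$.

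For the \emph{necessity}, assume $\lambda$ is a L\'evy measure. Condition (i) is Theorem \ref{th.Levy-equivalence}. For (ii), the well-definedness of the integral in \eqref{eq.char-lambda} forces $\int_U(1-\cos\scapro{u}{u^\ast})\lambda(\d u)<\infty$ for every $u^\ast\in U^\ast$; combined with the elementary inequality $1-\cos r\ge c(r^2\wedge 1)$ and the change-of-variables identity $\int_U(1-\cos\scapro{u}{u^\ast})\lambda(\d u)=\int_\R(1-\cos r)\scapro{\lambda}{u^\ast}(\d r)$, this gives the characterisation of Theorem \ref{thm:LevyHilbert} on $\R$. For (iii) I would transcribe the `only if' argument from Theorem \ref{th.Bochner-Levy-measure}: Lemma \ref{le.integrability-L1} lets one define $X_k:=\int_{(0,1]\times D_{\delta_k}}u\,\cp(\d s,\d u)$ with law $\eta(\lambda|_{\delta_k}^c)$ (Corollary \ref{co.integrable-cp}); Theorem \ref{th.Levy-equivalence} together with L\'evy's theorem in Banach spaces yields almost-sure convergence $X_k\to X$; \cite[Corollary 3.3]{deAcosta-1980} provides $\E\n X\n^p<\infty$ for every $p\ge 1$; and \cite[Corollary 3.3]{Hoffmann-Jorgensen-sums} upgrades the convergence to $L^p(\Omega;U)$. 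Theorem \ref{thm.UMD-iso-int} and Fatou's lemma then translate these $L^p$-bounds into (iii).

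For the \emph{sufficiency}, assume (i)--(iii) hold and $\lambda$ is supported on $B_U$. The implication $(2)\Rightarrow(1)$ of Theorem \ref{thm.UMD-iso-int}, applied to $F=G$, directly yields that $G$ is $L^p$-integrable with respect to $\cp$, so that $X:=\int_{(0,1]\times B_U}u\,\cp(\d s,\d u)$ exists in $L^p(\Omega;U)$ and, by Corollary \ref{co.integrable-cp}, has distribution $\eta(\lambda|_1)$. To invoke Theorem \ref{th.Levy-equivalence}, I need $X_k\to X$ in distribution. The key observation is the pathwise identity
\[ T_G(\omega)-T_{G_{\delta_k}}(\omega)=T_G(\omega)\circ M_k(\omega), \]
where $M_k(\omega)$ denotes multiplication by $\1_{\{\n u\n\le\delta_k\}}$ on $L^2_{N(\omega)}((0,1]\times U)$. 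Since $\lambda(\{0\})=0$, the random measure $N(\omega)$ almost surely has no atom at the origin, so each $M_k(\omega)$ is a contractive projection converging strongly to $0$; the left-ideal convergence property of $\gamma(L^2_{N(\omega)},U)$ in \cite[Theorem 9.1.14]{Hytonen-etal2} then yields pointwise $\gamma$-norm convergence $T_{G_{\delta_k}}(\omega)\to T_G(\omega)$. Dominated convergence, with envelope $\n T_G\n_\gamma^p$ furnished by (iii), promotes this to $\E\n T_G-T_{G_{\delta_k}}\n_\gamma^p\to 0$, and Theorem \ref{thm.UMD-iso-int} translates this into $X_k\to X$ in $L^p(\Omega;U)$. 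Hence $\eta(\lambda|_{\delta_k}^c)\to \eta(\lambda|_1)$ weakly, which together with (i) is exactly the criterion of Theorem \ref{th.Levy-equivalence}.

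The \emph{main technical hurdle} is the pathwise $\gamma$-norm convergence in the sufficiency step: recognising $T_G-T_{G_{\delta_k}}$ as a composition with the random projection $M_k$, verifying strong-operator decay $M_k(\omega)\to 0$ on $L^2_{N(\omega)}$ via $\lambda(\{0\})=0$, and combining the left-ideal property with dominated convergence. The equivalence of (iii) across all $p$ then comes for free: the necessity half works for every $p\ge 1$ thanks to de Acosta's theorem, while sufficiency only requires (iii) for one value of $p$.
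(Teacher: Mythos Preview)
Your proposal is correct and follows essentially the same blueprint as the paper: reduction to $\lambda$ supported on $B_U$, Theorem \ref{thm.UMD-iso-int} replacing Dirksen's isomorphism, and the L\'evy--de Acosta--Hoffmann-J{\o}rgensen chain for necessity exactly as in Theorem \ref{th.Bochner-Levy-measure}. The only differences are cosmetic: the paper obtains the pathwise convergence $T_{G_{\delta_k}}\to T_G$ via the $\gamma$-dominated convergence theorem \cite[Theorem 9.4.2]{Hytonen-etal2} rather than your multiplication-operator/ideal argument, and dispatches condition (ii) with the one-liner ``image measures of L\'evy measures are L\'evy'' rather than the $1-\cos$ estimate.
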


\begin{proof}
	The proof follows the lines of Theorem \ref{th.Bochner-Levy-measure}.
	Let $(\delta_k)_{k\in\N}\subseteq (0,1)$ be a sequence decreasing to $0$. Define $D_k:=\{u\in U: \delta_k < \norm{u}\le  1\}$ and the functions 
	$$G_k\colon (0,1]\times U\to U, \qquad G_k(t,u):= u \1_{D_k}(u),$$
	
	\smallskip
	{\rm `If'}: Assume first that the support of $\lambda$ is contained in $B_{U}$.
	
	By Lemma \ref{lem:4.2} and condition (ii), $G$ satisfies the weak $L^2$-integrability condition \eqref{eq:weakL2}. Condition (iii) guarantees by Theorem \ref{thm.UMD-iso-int} that the function $G$ is integrable with respect to $\cp$.  Thus, we can define the $U$-valued random variable
	\begin{align*}
		X:=\int_{(0,1]\times B_{U}} u \, \cp(\d s,\d u). 
	\end{align*}
 	Corollary \ref{co.integrable-cp} shows that the probability distribution of $X$ coincide with $\eta(\lambda)$, which shows that $\lambda$ is a L\'evy measure by its very definition.

	For the general case of a measure $\lambda$ with arbitrary support, we apply the decomposition $\lambda=\lambda|_{1}+\lambda|_{1}^c$. The measure $\lambda_{1}$ is a L\'evy measure by the first part, and $\lambda|_{1}^c$ is a L\'evy measure since it is finite. \cite[Proposition 5.4.9]{Linde} guarantees that $\lambda$ is a L\'evy measure.
	
	\smallskip
	{\rm `Only if':} Assume that $\lambda$ is a L\'evy measure. Theorem  \ref{th.Levy-equivalence}
	implies that $\lambda|_r^c$ is a finite measure for all $r>0$. This gives (i).
	It is clear that the image measures $\scapro{\lambda}{u^\ast}$ are L\'evy measures, which is (ii). To establish the integrability condition (iii), we can assume that the L\'evy measure $\lambda$ has support in $B_{U}$.
	
	Let $N$ be a Poisson random measure with intensity $\leb\otimes\lambda$.
	As in the proof of Theorem \ref{th.Bochner-Levy-measure} one sees that
	the $U$-valued random variables
	\begin{align*}
		X_k:=\int_{(0,1]\times D_k} u \, \cp(\d s,\d u)\quad\text{for } k\in\N
	\end{align*}
	converge almost surely to a random variable $X$, which must have distribution $\eta(\lambda)$ as defined in Subsection \ref{ss.PRM-Levy}.
	Since \cite[Corollary 3.3]{deAcosta-1980}
	guarantees  $\E(\norm{X}^{p})<\infty$,  \cite[Corollary 3.3]{Hoffmann-Jorgensen-sums} implies
	that $X_k\to X$ in $L^{p}(\Omega;U)$ as $k\to\infty$.
	
	We claim that from this it follows that $G$ is $L^p$-integrable with respect to $\widetilde N$ and  $$ X = \int_{(0,1]\times B_U} G\,\d\cp = \int_{(0,1]\times B_U} u \, \cp(\d s,\d u).$$
	All this follows from the arguments in the proof of Theorem \ref{thm.UMD-iso-int}: As in the proof of \ref{it.UMD-iso-int1}$\implies$\ref{it.UMD-iso-int2}, the fact that $(X_k)_{k\in\N}$ is Cauchy in $L^p(\Om;U)$ implies Cauchyness of $(T_{G_k}(\om))_{k\in\N}$ with respect to the norm of $\gamma(L^2_{N(\om)}(0,t]\times B_U,U)$ for a.a.\ $\omega\in\Omega$. The proof of \ref{it.UMD-iso-int2}$\implies$\ref{it.UMD-iso-int1} in Theorem \ref{thm.UMD-iso-int} establishes that $G$ is $L^p$-integrable with respect to $\widetilde N$ with integral $X$. Another application of the argument of  \ref{it.UMD-iso-int1}$\implies$\ref{it.UMD-iso-int2} now shows that (iii) holds.
\end{proof}

\begin{example}
	Let $U$ be the UMD-space $L^p_\mu(S)$, where $(S,{\mathcal S}, \mu)$ is a measure space and $p\in (1,\infty)$.
	By the identification of \cite[Proposition 9.3.2]{Hytonen-etal2}
	we have a natural isomorphism of Banach spaces
	$$
	\gamma(L^2_{N(\om)}((0,1]\times L^p_\mu(S)), L^p_\mu(S))\simeq L^p_\mu(S;L_{N(\omega)}^2((0,1] \times L^p_\mu(S)))
	$$
	with norm equivalence constants depending only on $p$.
	Set $$G(s,f) = f\1_{B_{{L_\mu^p}}}(f)$$ as before, and
	write $L^p_\mu:=L^p_\mu(S)$ for brevity. Reasoning formally (a rigorous version can be obtained by an additional mollification or averaging argument), it follows from Theorem \ref{thm.UMD-iso-int} (with $t=1$), Doob's inequality (applied twice), and Fubini's theorem that
	\begin{align*}
		\E\norm{T_G}_{\gamma(L^2_N((0,1]\times L_\mu^p), L^p_\mu)}^p
		& \simeq_p  \EE\sup_{0\le s\le 1}\left\|\int_{(0,s]\times B_{{L_\mu^p}}} f \, \cp(\d r,\d f)\right\|_{L_\mu^p}^p
		\\ & \simeq_p \EE\left\|\int_{(0,1]\times B_{{L_\mu^p}}} f \, \cp(\d r,\d f)\right\|_{L_\mu^p}^p
		\\ & = \EE\int_S\left|\int_{(0,1]\times B_{{L_\mu^p}}} f(\sigma) \, \cp(\d r,\d f)\right|^p\,\mu(\d\sigma)
		\\ & = \int_S\E \left|\int_{(0,1]\times B_{{L_\mu^p}}} f(\sigma) \, \cp(\d r,\d f)\right|^p\,\mu(\d\sigma)
		\\ & \simeq_p \int_S\E \sup_{0\le s\le 1}\left|\int_{(0,s]\times B_{{L_\mu^p}}} f(\sigma) \, \cp(\d r,\d f)\right|^p\,\mu(\d\sigma).
	\end{align*}
	As the expectation is for the supremum of a real-valued martingale, we can apply \cite[Theorem  3.2]{MarinelliRockner-purely}. This enables us to conclude in the case $p\in [2,\infty)$ that
	\begin{align*}
		&\EE\left[	\norm{T_G}_{\gamma(L^2_N((0,1]\times L^p_\mu), L^p_\mu)}^p\right]\\
		&\qquad \simeq_p \int_S \left( \left( \int_{(0,1]\times B_{{L_\mu^p}}} \abs{f(\sigma)}^2\, \d r\,\lambda(\d f)\right)^{p/2}
		+ \int_{(0,1]\times B_{{L_\mu^p}}} \abs{f(\sigma)}^p \, \d r\,\lambda(\d f)\right)\, \mu(\d \sigma)\\
		&\qquad = \int_S \left(  \int_{ B_{{L_\mu^p}}} \abs{f(\sigma)}^2\, \lambda(\d f)\right)^{p/2}\,\mu(\d \sigma)
		+ \int_{ B_{{L_\mu^p}}} \norm{f}^p_{L_\mu^p} \, \lambda(\d f).
	\end{align*}
	Thus, we obtain the same characterisation of a L\'evy measure on $L^p_\mu$ for $p\in [2,\infty)$ as in Theorem \ref{th.Bochner-Levy-measure}.
	
	In the case $p\in (1,2]$,  we obtain by \cite[Theorem  3.2 and page 5]{MarinelliRockner-purely}, that
	\begin{align*}
		& \EE\left[	\norm{T_G}_{\gamma(L^2_N((0,1]\times L^p_\mu), L^p_\mu)}^p\right]
		\\ &\qquad \simeq_p \int_S \inf \left\{ \left( \int_{B_{{L_\mu^p}}} \abs{g_{1,\sigma}(f)}^2\, \lambda(\d f)\right)^{p/2}
		+ \int_{ B_{{L_\mu^p}}} \abs{g_{2,\sigma}(f)}^p \, \d r\,\lambda(\d f)\right\}\, \mu(\d \sigma),
	\end{align*}
	where the infimum is taken over all functions $g_{1,\sigma}\in L^2_\lambda(B_{{L_\mu^p}}) $ and $g_{2,\sigma}\in L^p_\lambda(B_{{L_\mu^p}})$  with $f(\sigma)=g_{1,\sigma}(f)+g_{2,\sigma}(f)$ for all $f\in B_{{L_\mu^p}}$ and $\sigma\in S$.
	The expression on right-hand side is subtly different from the corresponding expression in Theorem \ref{th.Bochner-Levy-measure}. However, the present derivation, combined with Theorem \ref{th.Bochner-Levy-measure}, establish the equivalence of these expressions.
\end{example}

\begin{remark}
	Theorem \ref{thm.UMD-iso-int} continues to hold if the UMD property on $V$ is weakened to reflexivity with finite cotype, by making the following adjustments. First of all, a version of \cite[Theorem 2.1]{Yaroslavtsev-20}
	for $V$-valued martingales with independent increments is obtained in \cite[Proposition  6.7]{Yaroslavtsev-20} for Banach spaces $V$ with finite cotype. Using this result, the proof of \cite[Theorem 5.1]{Yaroslavtsev-20} can be repeated, resulting in a version of this theorem for reflexive space with finite cotype; see \cite[Proposition 6.8]{Yaroslavtsev-20}. Reflexivity enters in view of the results in \cite[Section 3]{Yaroslavtsev-20} that are still needed in their stated forms. Our Theorem \ref{th.UMD-Levy-measure} can be extended accordingly. We thank Ivan Yaroslavstev and Gergely B\'odo for kindly pointing this out to us. Finally we thank the anonymous referee for many detailed comments.
\end{remark}

\section{Outlook}
Similarly as in finite dimensions, infinitely divisible measures on a Banach space $U$ are characterised by  triplets $(a,Q,\lambda)$ where $a\in U$, $Q\colon U^\ast\to U$ is a nonnegative, symmetric trace class operator and $\lambda$ is a L\'evy measure on $\Borel(U)$. For weak convergence of a sequence $(\mu_n)_{n\in\N}$ of infinitely divisible measures with characteristics $(a_n,Q_n, \lambda_n)$ necessary conditions are known in Banach spaces, but in general they are not sufficient; see \cite[Prop.\ 5.7.4]{Linde}. Only in separable Hilbert spaces, necessary conditions are known, which are established in \cite[Theorem  5.5]{Parthasarathy}. In fact, as pointed out in \cite{Linde}, necessary conditions in Banach spaces would have allowed for an explicit characterisation for L\'evy measures. As we have now such a characterisation, our result should enable the derivation for necessary conditions for the weak convergence of a sequence of infinitely divisible measures on $L^p$-spaces or in UMD-spaces.

In the current work, using the $L^p$-estimates for simple functions in \cite{Dirksen, Yaroslavtsev-20}, we have already introduced a description of the largest space of vector-valued deterministic functions integrable with respect to a compensated Poisson random measure in either $L^p$-spaces or UMD-spaces; see Lemma \ref{lem:simple-Iq-dense} and Theorem \ref{thm.UMD-iso-int}. Such a description of the space of deterministic integrands can be used to derive the existence of a stochastic integral for random vector-valued integrands with respect to a compensated Poisson random measure, similarly as in \cite{Dirksen-Maas-Neerven}. Since the compensated Poisson random measure has independent increments, the decoupled tangent sequence can be constructed, and thus the decoupling inequalities in UMD-spaces enables to derive the existence of the stochastic integral.\\[.5cm]

{\bf Acknowledgement.} The authors would like to thank Gergely B\'odo for proofreading an earlier version of this  article and providing helpful comments, and Ivan Yaroslavtsev for helpful suggestions. After the completion of this paper, it was kindly pointed out by Sjoerd Dirksen that Theorem \ref{th.Bochner-Levy-measure} had been obtained independently in an unfinished preprint with Carlo Marinelli as early as 2016, where it is pointed out that the case $p \in [2,\infty)$ was already obtained in \cite{GMZ79}.

\bibliography{biblio-Levy-measures}
\bibliographystyle{plain}

\end{document}